\newif\ifdviwin
\numberwithin{equation}{section}
\theoremstyle{plain}
\newtheorem{theorem}{Theorem}[section]
\newtheorem*{Main Theorem}{Main Theorem}
\newtheorem{proposition}[theorem]{Proposition}
\newtheorem{lemma}[theorem]{Lemma}
\newtheorem{corollary}[theorem]{Corollary}
\theoremstyle{definition}
\newtheorem{notation}[theorem]{Notation}
\newtheorem{Question}[theorem]{Question}
\newtheorem{construction}[theorem]{Construction}
\newtheorem{remark}[theorem]{Remark}
\newtheorem{definition}[theorem]{Definition}
\newtheorem{example}[theorem]{Example}
  \newcounter{numlist} %
  {\end{list}}%
\theoremstyle{remark}
\newtheorem{chunk}[theorem]{}
\newenvironment{bfchunk}{\begin{chunk}\textbf}{\end{chunk}}
\numberwithin{equation}{theorem}
\renewcommand{\emph}{\it}
\def\m{{\mathfrak m}}
\def\Tor{\operatorname{Tor}}
\def\im{\operatorname{im}}
\def\dim{\operatorname{dim}}
\def\dpth{\operatorname{depth}}
\def\htt{\operatorname{height}}
\def\im{\operatorname{im}}
\newcommand{\excise}[1]{}
\newcommand{\ba}{\mathbf{a}}
\newcommand{\bb}{\mathbf{b}}
\newcommand{\bc}{\mathbf{c}}
\newcommand{\bd}{\mathbf{d}}
\newcommand{\be}{\mathbf{e}}
\newcommand{\bff}{\mathbf{f}}
\newcommand{\bu}{{\bf u}}
\newcommand{\bx}{{\bf x}}
\newcommand{\bm}{{\mathbf{m}}}
\newcommand{\bma}{{\bm^{\ba}}}
\newcommand{\bmb}{{\bm^{\bb}}}
\newcommand{\bv}{{\mathbf{v}}}
\newcommand{\bva}{{\bv^{\ba}}}
\newcommand{\bvb}{{\bv^{\bb}}}
\newcommand{\bvc}{{\bv^{\bc}}}
\newcommand{\ol}{\overline}
\newcommand{\pd}{\operatorname{pd}}
\newcommand{\fm}{\mathfrak m}
\newcommand{\sfk}{\mathsf k}
\newcommand{\cC}{\mathcal C}
\newcommand{\ocC}{\ol{\cC}}
\newcommand{\cN}{\mathcal N}
\newcommand{\cNr}{\cN_r}
\newcommand{\supp}{\operatorname{Supp}}
\newcommand{\bw}{{\mathsf\Lambda}}
\newcommand{\lcm}{\operatorname{lcm}}
\newcommand{\ZZ}{\mathbb{Z}}
\newcommand{\RR}{\mathbb{R}}
\newcommand{\ssm}{\smallsetminus}
\newcommand{\qand}{\quad \mbox{and} \quad}
\newcommand{\qor}{\quad \mbox{or} \quad}
\newcommand{\qif}{\quad \mbox{if} \quad}
\newcommand{\qfor}{\quad \mbox{for} \quad}
\newcommand{\qwhere}{\quad \mbox{where} \quad}
\newcommand{\qwith}{\quad \mbox{with} \quad}
\newcommand{\qforsome}{\quad \mbox{for some} \quad}
\newcommand{\qforall}{\quad \mbox{for all} \quad}
\begin{document}
\bibliographystyle{amsplain}

\author[S.M.~Cooper]{Susan M. Cooper}
\address{Department of Mathematics\\
University of Manitoba\\
420 Machray Hall\\
186 Dysart Road\\
Winnipeg, MB\\
Canada R3T 2N2}
\email{susan.cooper@umanitoba.ca}

\author[S.~El Khoury]{Sabine El Khoury}
\address{Department of Mathematics,
American University of Beirut,
Bliss Hall 315, P.O. Box 11-0236,  Beirut 1107-2020,
Lebanon}
\email{se24@aub.edu.lb}

\author[S.~Faridi]{Sara Faridi}
\address{Department of Mathematics \& Statistics\\
Dalhousie University\\
6316 Coburg Rd.\\
PO BOX 15000\\
Halifax, NS\\
Canada B3H 4R2} 
\email{faridi@dal.ca}

\author[S~Mayes-Tang]{Sarah Mayes-Tang}
\address{Department of Mathematics\\
University of Toronto\\
40 St. George Street, Room 6290\\
Toronto, ON \\
Canada M5S 2E4}
\email{smt@math.toronto.edu}

\author[S.~Morey]{Susan Morey}
\address{Department of Mathematics\\
Texas State University\\
601 University Dr.\\
San Marcos, TX 78666\\U.S.A.}
\email{morey@txstate.edu}

\author[L.~M.~\c{S}ega]{Liana M.~\c{S}ega}
\address{Liana M.~\c{S}ega\\ Department of Mathematics and Statistics\\
   University of Missouri\\ \linebreak Kansas City\\ MO 64110\\ U.S.A.}
     \email{segal@umkc.edu}

\author[S.~Spiroff]{Sandra Spiroff }
\address{Department of Mathematics,
University of Mississippi,
Hume Hall 335, P.O. Box 1848, University, MS 38677
USA}

\email{spiroff@olemiss.edu}

\urladdr{http://math.olemiss.edu/sandra-spiroff/}

\title[Powers of Graphs]
{Powers of Graphs \& Applications to Resolutions of Powers of Monomial Ideals}

\begin{abstract}  
This paper is concerned with the question of whether geometric
  structures such as cell complexes can be used to simultaneously
  describe the minimal free resolutions of {\it all} powers of a
  monomial ideal. We provide a full answer in the case of square-free
  monomial ideals of projective dimension one, by introducing a
  combinatorial construction of a family of (cubical) cell complexes
  whose 1-skeletons are powers of a graph that supports the resolution
  of the ideal.

\end{abstract}
\maketitle 

\section{Introduction}

   The search for topological objects whose chain maps coincide
    with  free resolutions of a given monomial ideal has been a major
    research area in commutative algebra. Such topological objects are
    said to {\emph support} a (minimal) free resolution of the
    ideal. Diane Taylor~\cite{T} showed that every ideal in a polynomial ring
    generated by $q$ monomials has a free resolution supported on a
    $q$-simplex.

  Our work connects two fruitful directions of research in commutative
  algebra. On one hand, starting with Taylor's construction there has been a
  substantial body of work on finding smaller topological structures,
  such as simplical or more generally cell complexes, which support
  free resolutions of a given monomial ideal. We refer to
  Peeva~\cite[Chapter III]{P} for the basics of such constructions. On
  the other hand, the problem of studying the powers of a (not
  necessarily monomial) ideal $I$ arises naturally in the study of
  Rees algebras.  There are numerous analyses of invariants of the
  powers $I^r$ such as depth, regularity, projective dimension, Betti
  numbers, free resolutions, and more.  For a sampling, see Guardo and
  Van Tuyl \cite{GV}, Morey \cite{M}, Fouli and Morey \cite{FM},
  Engstr\"{o}m and Noren \cite{EN}.

At the intersection of the above mentioned directions of research lies the following: 

\begin{Question}\label{q:driving}
If $\Gamma$ is a cell complex supporting a minimal free resolution of
a monomial ideal $I$, can $\Gamma$ be used to define a family of cell
complexes $\{\Gamma^r\}_{r\ge 1}$ such that $\Gamma^r$ supports a
minimal free resolution of $I^r$ for each $r\ge 1$?
\end{Question}

In this paper we provide a positive answer to this question in the case when
$\Gamma=G$ is a tree, that is, a graph with no cycles, supporting a
minimal free resolution of a square-free monomial ideal $I$. Given a
positive integer $r$, we build a graph $G^r$, which can be viewed as a
power of $G$, using an abelianized extension of a graph-theoretic
construction called {\it box product} (\cref{s:graph product}). We
then use $G^r$ to construct a polyhedral cell complex $\ol{G^r}$,
which is shown to support a minimal free resolution of the ideal
$I^r$.

The polyhedral cell complex $\ol{G^r}$ is built on a skeleton that
originates from the graph $G^r$. More precisely, assuming that $G$ has
$q+1$ vertices, we describe an embedding of $G^r$ into $\mathbb R^{q}$
such that all the vertices of $G^r$ have non-negative integer
coordinates, and all edges have unit length and are parallel to one of
the standard basis vectors in $\mathbb R^q$.  The graph $G^r$ is no
longer a tree, except when $q \leq 1$ or $r=1$, as multiple cycles are
formed among its edges.  However, due to our embedding in $\mathbb
R^q$, the cycles are easily recognizable: they appear in $1$-skeletons
of cubes of various dimensions.  This is detailed in
Section~\cref{ss:embed}.  In Section~\cref {ss:graphpowers}, we
describe these cubes using an orientation on the edges that ensures
that the $1$-skeleton of each such cube has a source and a sink, which
can be used to identify the cube, and we define the cell complex
$\ol{G^r}$ as the collection of these cubes. \cref{p:poly-cell-cx}
proves that $\ol{G^r}$ is indeed a polyhedral cell complex.

On the other hand, it is known by Faridi and Hersey \cite{FH} that
every monomial ideal $I$ of projective dimension one has a minimal
free resolution supported on a graph $G$. When $I$ is square-free, we
label the vertices of the cell complex $\ol{G^r}$ described above
using the minimal monomial generators of $I^r$
(Section~\cref{ss:homogenize}) and we describe explicitly the
differential of the homogenized cellular chain complex that is
supported on $\ol{G^r}$; see \eqref{e:res-again}. We then show in
\cref{p:isomcomplexes} that this complex is isomorphic to a strand of
the Koszul complex resolving the Rees algebra of $I$. The fact that
this chain complex is a minimal free resolution of $I^r$ is a
consequence of the fact that the ideal $I$ is of linear type and its
Rees algebra is a complete intersection, as shown in
\cref{t:lineartype,t:regular}. In particular, we find explicit
formulas for the projective dimension and the Betti numbers of $I^r$;
see \cref{c:pd-I^r,c:betti-I^r}.

Our construction of the powers $\ol{G^r}$ points towards the
possibility of defining, more generally, the powers of any simplicial
(or cell) complex, and providing additional classes where
\cref{q:driving} has a positive answer. This is a topic for ongoing and future work.

The interested reader might be curious about a slightly different, but related
  version of \cref{q:driving}: if $\Gamma$ is a $q$-simplex (which
  supports a free resolution of {\emph any} ideal generated by $q$
  monomials) and $r$ is a positive integer, can we construct a cell
  complex $\Gamma^r$, starting from $\Gamma$, which supports a free
  resolution of $I^r$, where $I$ is {\emph any} ideal generated by $q$
  monomials? This question has been addressed in \cite{L2,Lr}.

  \section{Setup}\label{s:setup} 

This section provides the background and notation that will be used
throughout the rest of the paper, by building a correspondence between
monomial ideals and combinatorial structures that support their
resolutions.

\begin{notation} If $G$ is a graph with vertices $V(G)=\{x_1, \ldots, x_n\}$, then an undirected edge between vertices $x_j$ and $x_i$ will be denoted $\{x_j, x_i\}$ while a directed edge from $x_j$ to $x_i$ will be written $[x_j, x_i]$. The graphs used in this work will be simple graphs, that is, without loops or multiple edges. Throughout the paper, all graphs will be assumed to be connected unless otherwise indicated.
\end{notation}

 \begin{bfchunk}{Cell complexes.}\label{ss:cellcomplexes} The topological objects in this paper are {\emph polyhedral cell
  complexes}. See \cite{M80, OW} for additional resources on these
topics.
 \end{bfchunk}

\begin{definition} (\cite[p.~62]{MS})\label{d:pcx} Let $X$ be a
  finite collection of convex polytopes in a real vector space
  $\mathbb R^q$.  If these convex polytopes, called {\bf faces} of
  $X$, satisfy the two properties below, then $X$ is said to be  a {\bf
    polyhedral (or polytopal) cell complex}:
\begin{enumerate}
\item if $P$ is a polytope in $X$ and $F$ is a face of $P$, then $F$
  is in $X$;
\item if $P$ and $Q$ are polytopes in $X$, then $P \cap Q$ is a face
  of $P$ and a face of $Q$.
\end{enumerate}
\end{definition}

The faces of the polyhedral cell complexes that we will see in this
paper will be cubes of varying dimensions. Specifically, our
  $n$-cells will always be $n$-dimensional cubes.

\begin{definition}\label{d:cube}
An {\bf $n$-cube} $C_n$ is the cartesian product of $n$ unit
intervals. That is, $C_n = I_1 \times \cdots \times I_n$, for $n \geq 1$, where $I_i$
is a unit interval $[0,1]$.  The {\bf boundary} of the $n$-cube consists of
the $n-1$ cubes formed by replacing one of the unit intervals by one
of its two boundary points, which are $0$-cubes. Thus, there are $2n$ boundary components,
each of which has the form of $I_1 \times \cdots \times \{0\} \times
\cdots \times I_n$ or $I_1 \times \cdots \times \{1\} \times \cdots
\times I_n$. 

An $n$-cube can be built in $\RR^q$ when
$q\geq n$ by taking a point $\ba$ in $\RR^q$ and a collection of
$n$ standard unit vectors $\be_{i_1}, \ldots, \be_{i_n}$. The vertices of the cube are the endpoints of the vectors
$$\ba+ \sum_{j \in A} \be_{i_j}$$ for all $A \subseteq [n] = \{1,\ldots,n\}$.  The edges of the cube, each of which has the form 
$$\{\ba+ \sum_{j \in A} \be_{i_j}, \ba+ \sum_{j \in A} \be_{i_j} + \be_{i_k}\} \qforsome k \not\in A,$$
inherit a natural direction from that of $\be_{i_k}$. Viewing unit vectors as embedded copies of the unit interval $[0,1]$, directed from $0$ to $1$, the directed edges are then
$$[\ba+ \sum_{j \in A} \be_{i_j}, \ba+ \sum_{j \in A} \be_{i_j} + \be_{i_k}] \qforsome k \not\in A.$$
A vertex $v$ of a directed graph is
a {\bf sink} if every edge that contains $v$ is of the form
$[w,v]$ for some vertex $w$. Similarly, $v$ is a {\bf source} if
every edge that contains $v$ is of the form $[v,w]$ for some
vertex $w$. Note that 
$\ba$ is a source of the directed graph formed by the edges of the cube described above and $ \bb = \ba+ \sum _{j \in [n]} \be_{i_j}$ is a sink. 

\end{definition}

Given a polyhedral cell complex $\Gamma$ let $\Gamma^{(n)}$ be the set of
$n$-cells of $\Gamma$.  For convenience, $\varnothing$ is considered to
be a $(-1)$-cell, and $\Gamma^{(-1)}=\{\varnothing\}$.

\begin{definition}
\label{oriented-chain-complex}
For  a field $\sf k$ and each $i\ge 0$,
let $\sfk^i$ denote an $i$-dimensional  $\sf k$-vector space.
The {\bf oriented chain complex} of $\Gamma$ is the complex
$$
C(\Gamma, \sfk): \qquad \dots\to \sfk^{|\Gamma^{(i)}|}\xrightarrow{\partial_i} \sfk^{|\Gamma^{(i-1)}|}\to \dots \to \sfk^{|\Gamma^{(1)}|}\xrightarrow{\partial_1}\sfk^{|\Gamma^{(0)}|}
$$ with differentials defined as follows:  For $c \in \Gamma^{(i)}$ and $c'\in \Gamma^{(i-1)}$, let $\varepsilon(c,c')=0$ if $c'$ is not a face of $c$, and otherwise $\varepsilon(c,c')= \pm 1$, chosen by convention, commonly by using an orientation or an incidence function, so that $\partial^2=0$.
Then for all $i\ge 1$, $c\in \Gamma^{(i)}$, and
basis elements $\bu_c$, define
$$
\partial_i(\bu_c)=\sum_{c'\in \Gamma^{(i-1)}}\varepsilon(c,c')\bu_{c'} \,.
$$
\end{definition}

In particular, if $\Gamma$ is a polyhedral cell complex (\cref{d:pcx}), the faces of $\Gamma$ can be
oriented (in an arbitrary manner) so that the boundary chain of a face
$F$ is $\partial(F) = \sum \varepsilon (F, G) G$, where the sum is
taken over all maximal proper faces $G$ of $F$ and
$$
\varepsilon (F,G) = \begin{cases} +1 & {\text{ if the orientation of }} F {\text{ induces the orientation of }} G; \\
-1 & {\text{ otherwise. }}
\end{cases}
$$

\begin{bfchunk}{Cellular Resolutions.}
\label{s:homogenization} Throughout, assume that $R=\sfk[x_1, \ldots, x_n]$ is a polynomial ring over a
field $\sfk$ and $I=(m_0,\ldots,m_q)$ is an ideal generated by
monomials. A {\bf graded free resolution of $I$} is an exact
sequence of free $S$-modules of the form:
\begin{equation}
\label{resolution}
\mathbb{F}: \ \ \ 0 \to M_d \stackrel{\partial_d}{\longrightarrow} \cdots \to M_i
\stackrel{\partial_i}{\longrightarrow} M_{i-1} \to\cdots \to M_1
\stackrel{\partial_1}{\longrightarrow} M_0
\end{equation}
 where $I \cong
M_0/\im(\partial_1)$, and each map $\partial_i$ is graded, in the
sense that it preserves the degrees of homogeneous elements.

If $\partial_i(M_i) \subseteq (x_1,\ldots,x_n) M_{i-1}$ for every
$i>0$, then the free resolution $\mathbb{F}$ is {\bf minimal}.
The length of a minimal free resolution of $I$ (which is $d$ in the case of
$\mathbb{F}$ above) is another invariant of $I$ called the {\bf
  projective dimension} and is denoted by $\pd_R(I)$. 

One concrete way to calculate a multigraded free resolution is to use
chain complexes of topological objects, and in particular of cellular
chain complexes. This approach was initiated by Taylor~\cite{T}, and
further developed by Bayer and Sturmfels~\cite{BS} and many other
researchers.

Let $\Gamma$ be a polyhedral cell complex, or more generally any regular CW complex, with vertex set labeled by the monomials $m_0, \dots, m_q$. We label each cell $c\in \Gamma$ by  $\lcm(c)$, which is defined as the least common multiple of the labels of its   vertices. The {\bf homogenization} of the oriented chain complex $C(\Gamma, \sfk)$ defined in \cref{oriented-chain-complex}  is a complex $\mathbb F=\mathbb F_\Gamma$ as displayed in \eqref{resolution}, such that 
$$M_i=\bigoplus_{c\in \Gamma^{(i)}}R(\lcm(c))\,.$$ We denote the basis
element corresponding to  the free module $R(\lcm(c))$ in this
sum by $\bu_c$. The differential of $\mathbb F_\Gamma$ is described by
$$
\partial_i(\bu_c)=\sum_{c'\in \Gamma^{(i-1)}}\varepsilon(c,c')\frac{\lcm(c)}{\lcm(c')}\bu_{c'} 
$$
for each $c\in \Gamma^{(i)}$. Recall that $\varepsilon(c,c')$ is nonzero only when $c'$ is a face of $c$, and in this case $\lcm(c')$ divides $\lcm(c)$. 

We say that $\Gamma$ {\bf supports a resolution} of $I= (m_0, \dots,
m_q)$ if the chain complex $\mathbb F_\Gamma$ is a resolution of
$I$. In this case, we say $\mathbb F_\Gamma$ is a {\bf cellular
  resolution} of $I$.

Note that if $\mathbb F_\Gamma$ is a resolution of $I$, then it is minimal if and only if $\lcm(c)\ne \lcm(c')$ for every cell $c$ and every maximal face $c'$ of $c$, see for example \cite{BS} or \cite{P}.
\end{bfchunk}

\begin{bfchunk}{Ideals of projective dimension one.}
\label{ss:pd1}  In \cite[Theorem 27]{FH}, Faridi and Hersey proved that for a monomial
ideal $I$, having a minimal free resolution supported on a tree is
equivalent to having projective dimension one. Here a graph can be
viewed as a polyedral complex where the maximum dimension of a cell is
one.  Moreover, Faridi and
Hersey gave a concrete construction describing how to build the tree
given a monomial generating set of an ideal of projective dimension
one.
\end{bfchunk}

\begin{example}\label{e:running1} Let $I=(xy,yz,zu)$ in $R = \sfk[x,y,z,u]$. Then 
  $\pd_R(I)=1$, and $I$ has a minimal resolution supported on
  the labeled graph below.

{\small \begin{center}
		\begin{tikzpicture}
			\coordinate (A) at (0, 0);
			\coordinate (B) at (2,0);
			\coordinate (C) at (1, 1.5);
			\coordinate (D) at (0.5, 0.75);
			\coordinate (E) at (0.5, 0.75);
			\coordinate (F) at (1.5, 0.75);
			\draw[black, fill=black] (A) circle(0.05);
			\draw[black, fill=black] (B) circle(0.05);
			\draw[black, fill=black] (C) circle(0.05);
			
			\draw[-] (A) -- (C);
			\draw[-] (B) -- (C);
			
			\node[label = below :$xy$] at (A) {};
			\node[label = below :$zu$] at (B) {};
			\node[label = above :$yz$] at (C) {};
			\node[label ={[label distance=-4pt] left :$xyz$}] at (D) {};
			\node[label ={[label distance=-3pt] right :$yzu$}] at (F) {};
		\end{tikzpicture}
\end{center}} 
\end{example}

\section{Powers of Trees}\label{s:graph product}

In this section we show that if $G$ is a graph supporting a minimal free
  resolution of a square-free monomial ideal $I$ and
  $r$ is a positive integer, then we can build a polyhedral cell complex
  $\ol{G^r}$ from the $r^{th}$ power graph $G^r$ (described below), which
   supports a minimal free resolution of 
  $I^r$. In this set, the ideal $I$ has projective dimension one in $R = \sfk[x_1, \dots, x_n]$,
  as per \cite[Theorem 27]{FH}.

  Our definition of $G^r$ (\cref{d:graph-powers}) is an abelianized
  extension of a well-known construction in graph theory called the
  {\emph box product}.  Given two graphs $G$ and $H$, the
  {\bf Cartesian}, or {\bf box}, {\bf product} of $G$ and $H$, denoted by $G
  \square H$, is a new graph whose vertex set is the
  Cartesian product of the vertices of $G$ with those of $H$, and whose edges are
  obtained from the edges of $G$ and $H$.  If $G$ and $H$ have the
  same vertex set, then one can define an abelian version of this
  product by forming a graph quotient that identifies $(v_i, v_j)$
  with $(v_j, v_i)$. Note that no loops or double edges are created in
  this process since if either $\{(v_i,v_j),(v_j, v_i)\}$ is an edge of $G \square H$
  or $(v_i,v_j)$ and $(v_j, v_i)$ have a common neighboring edge, then $i=j$.

\begin{definition}[{\bf The (directed) graph $G^r$}]\label{d:graph-powers}
   Let $G$ be a graph on the vertex set $\{ v_0, \ldots, v_{q}\}$ and let $r$ be a
   positive integer. Define
   $$\cNr=\{(a_0,\ldots,a_q) \in \ZZ_{\geq0}^{q+1} \mid
 a_0+ \cdots+a_q=r\}.$$ Let $G^r$ be the graph with distinct vertices labeled $ \bva$ for each $ \ba
 \in \cNr$, that is
 $$V(G^r)=\{ \bva=v_0^{a_0} v_1^{a_1}\cdots v_{q}^{a_{q}} \mid \ba=(a_0,\ldots,a_q)
 \in \cNr \},$$ and edge set 
 $$E(G^r)=\left\{ \{\bva, \bvb \} \mid   \bva=Wv_j, \bvb = Wv_i
 \mbox{ for some }\{v_j, v_i\} \in E(G)  \mbox{ and } W \in V(G^{r-1}) \right\},$$
  where if $W= \bvc$ then $Wv_i= {\bf v^{c+f_i}}$. Here $\bff_0, \dots, \bff_q$ denotes the standard basis of $\mathbb R^{q+1}$; where the indexing starts at $0$ for later convenience. More precisely, $\bff_i$ denotes the $(i+1)^{st}$ standard basis vector  \begin{equation}\label{e:f} \bff_i=(f_0, \dots, f_q), \qwith  f_i= 1 \qand  f_k=0 \qif k \neq i. 
   \end{equation}
  
 If $G$ is a directed graph, an edge $\{\bva, \bvb\}$ of $G^r$ as
 described above inherits its direction from that of $\{v_j, v_i\}$.
We denote the directed edge from $\bva$ to $\bvb$ by $[\bva, \bvb]$.
\end{definition}

Notice that under this definition, if $G$ is a directed graph then 
$$[v_0^{a_0}v_1^{a_1} \cdots v_{q}^{a_{q}}, \,
 v_0^{b_0}v_1^{b_1} \cdots v_{q}^{b_{q}}] \in E(G^{r})$$ if and only if there exits
 an edge $[v_j, v_i]$  of $G$ with $$a_j +1 =b_j, a_i-1=b_i \qand 
 a_k=b_k \qfor k\neq i,j.$$
 
\begin{example} \label{e:boxproduct}
From the path $G$ below, we form the product $G^2$ by gluing the paths $G_x^2$, $G_y^2$, $G_z^2$, and $G_w^2$  which are obtained by multiplying the vertices in the path $G$ by $v_0=x, v_1=y, v_2=z$ and $v_3=w$ respectively.  Note that $G_x^2$ and $G_w^2$, which appear in $G^2$ along the top and the right, respectively, are glued at $xw$; $G_y^2$ and $G_z^2$ are glued at $yz$.

{\small \begin{tikzpicture}
\coordinate (Z) at (1, 1.4);
\coordinate (A) at (3, 1);
\coordinate (B) at (4,1);
\coordinate (C) at (5, 1);
\coordinate (D) at (6, 1);
\draw[black, fill=black] (A) circle(0.05);
\draw[black, fill=black] (B) circle(0.05);
\draw[black, fill=black] (C) circle(0.05);
\draw[black, fill=black] (D) circle(0.05);
\draw[-] (A) -- (B);
\draw[-] (B) -- (C);
\draw[-] (C) -- (D);
\node[label = below :$x$] at (A) {};
\node[label = below :$y$] at (B) {};
\node[label = below :$z$] at (C) {};
\node[label = below :$w$] at (D) {};
\node[label = below :$G:$] at (Z) {};

\\
\coordinate (Z1) at (-1, 0.4);
\coordinate (A1) at (0, 0);
\coordinate (B1) at (1,0);
\coordinate (C1) at (2, 0);
\coordinate (D1) at (3, 0);
\draw[black, fill=black] (A1) circle(0.05);
\draw[black, fill=black] (B1) circle(0.05);
\draw[black, fill=black] (C1) circle(0.05);
\draw[black, fill=black] (D1) circle(0.05);
\draw[-] (A1) -- (B1);
\draw[-] (B1) -- (C1);
\draw[-] (C1) -- (D1);
\node[label ={[label distance=-4pt] below :$x^2$}] at (A1) {};
\node[label = below :$xy$] at (B1) {};
\node[label = below :$xz$] at (C1) {};
\node[label = below :$xw$] at (D1) {};
\node[label = below :$G^2_x:$] at (Z1) {};

\coordinate (Z2) at (-1, -0.6);
\coordinate (A2) at (0, -1);
\coordinate (B2) at (1,-1);
\coordinate (C2) at (2, -1);
\coordinate (D2) at (3, -1);
\draw[black, fill=black] (A2) circle(0.05);
\draw[black, fill=black] (B2) circle(0.05);
\draw[black, fill=black] (C2) circle(0.05);
\draw[black, fill=black] (D2) circle(0.05);
\draw[-] (A2) -- (B2);
\draw[-] (B2) -- (C2);
\draw[-] (C2) -- (D2);
\node[label ={[label distance=-4pt] below :$y^2$}] at (B2) {};
\node[label = below :$xy$] at (A2) {};
\node[label = below :$yz$] at (C2) {};
\node[label = below :$yw$] at (D2) {};
\node[label = below :$G^2_y:$] at (Z2) {};

\coordinate (Z3) at (-1, -1.6);
\coordinate (A3) at (0, -2);
\coordinate (B3) at (1,-2);
\coordinate (C3) at (2, -2);
\coordinate (D3) at (3, -2);
\draw[black, fill=black] (A3) circle(0.05);
\draw[black, fill=black] (B3) circle(0.05);
\draw[black, fill=black] (C3) circle(0.05);
\draw[black, fill=black] (D3) circle(0.05);
\draw[-] (A3) -- (B3);
\draw[-] (B3) -- (C3);
\draw[-] (C3) -- (D3);
\node[label ={[label distance=-4pt] below :$z^2$}] at (C3) {};
\node[label = below :$xz$] at (A3) {};
\node[label = below :$yz$] at (B3) {};
\node[label = below :$zw$] at (D3) {};
\node[label = below :$G^2_z:$] at (Z3) {};

\coordinate (Z4) at (-1, -2.6);
\coordinate (A4) at (0, -3);
\coordinate (B4) at (1,-3);
\coordinate (C4) at (2, -3);
\coordinate (D4) at (3, -3);
\draw[black, fill=black] (A4) circle(0.05);
\draw[black, fill=black] (B4) circle(0.05);
\draw[black, fill=black] (C4) circle(0.05);
\draw[black, fill=black] (D4) circle(0.05);
\draw[-] (A4) -- (B4);
\draw[-] (B4) -- (C4);
\draw[-] (C4) -- (D4);
\node[label ={[label distance=-4pt] below :$w^2$}] at (D4) {};
\node[label = below :$xw$] at (A4) {};
\node[label = below :$yw$] at (B4) {};
\node[label = below :$zw$] at (C4) {};
\node[label = below :$G^2_w:$] at (Z4) {};

\coordinate (W) at (5.5, -1.4);
\coordinate (K) at (6, 0);
\coordinate (L) at (7,0 );
\coordinate (M) at (8,0);
\coordinate (N) at (9, 0);
\coordinate (E) at (7, -1);
\coordinate (F) at (8, -1);
\coordinate (G) at (9, -1);
\coordinate (H) at (8,-2);
\coordinate (I) at (9, -2);
\coordinate (J) at (9, -3);
\draw[black, fill=black] (K) circle(0.05);
\draw[black, fill=black] (L) circle(0.05);
\draw[black, fill=black] (M) circle(0.05);
\draw[black, fill=black] (N) circle(0.05);
\draw[black, fill=black] (E) circle(0.05);
\draw[black, fill=black] (F) circle(0.05);
\draw[black, fill=black] (G) circle(0.05);
\draw[black, fill=black] (H) circle(0.05);
\draw[black, fill=black] (I) circle(0.05);
\draw[black, fill=black] (J) circle(0.05);
\draw[-] (K) -- (L);
\draw[-] (L) -- (M);
\draw[-] (M) -- (N);
\draw[-] (L) -- (E);
\draw[-] (M) -- (F);
\draw[-] (N) -- (G);
\draw[-] (E) -- (F);
\draw[-] (F) -- (G);
\draw[-] (F) -- (H);
\draw[-] (G) -- (I);
\draw[-] (H) -- (I);
\draw[-] (I) -- (J);
\node[label = {[label distance=-5pt] below right :$x^2$}] at (K) {};
\node[label =  {[label distance=-5pt] below right :$xy$}]  at (L) {};
\node[label = {[label distance=-5pt] below right:$xz$}] at (M) {};
\node[label = {[label distance=-5pt] below right :$xw$}] at (N) {};
\node[label = {[label distance=-5pt] below right :$y^2$}] at (E) {};
\node[label = {[label distance=-5pt] below right :$yz$}] at (F) {};
\node[label = {[label distance=-5pt] below right :$yw$}] at (G) {};
\node[label = {[label distance=-5pt] below right :$z^2$}] at (H) {};
\node[label = {[label distance=-5pt] below right :$zw$}] at (I) {};
\node[label = {[label distance=-5pt] below right :$w^2$}] at (J) {};
\node[label = below :$G^2:$] at (W) {};
\end{tikzpicture} } 

\end{example}

\begin{construction}[{\bf Labeling and directing a rooted tree}]\label{n:direction} 
Let $G$ be a tree with $q+1$ vertices. Fix a vertex $v_0$ of $G$ to be
the {\bf root} of the tree. Label the remaining vertices so that
the vertices along the unique path from $v_0$ to any vertex are
labeled in increasing order, so that if $$v_0, \, v_{i_1}, \, v_{i_2},
\ldots , v_{i_t}$$ are the distinct vertices of a path between $v_0$
and $v_{i_t}$, then $i_j < i_k$ whenever $1\leq j <k \leq t$.  With
this labeling, a direction on the edges of $G$ is defined by writing
every edge $[v_j,v_i]$ as an ordered pair $e_i=[v_j, v_i]$ where $j <
i$.

\smallskip

\noindent \pmb{$\tau(i)$:} We denote the index $j$ in
the directed edge $e_i$ by $\tau(i)$, so that the
directed edges of $G$ can be written as $$e_i=[v_{\tau(i)}, \, v_i] \qfor i \in
\{1,\ldots,q\}.$$
 \end{construction}

For the remainder of the paper, all directed graphs will be assumed to have vertices labeled in accordance with \cref{n:direction}. In particular, $v_0$ will always denote the root of a directed tree.

Notice that this uniquely labels the $q$ edges of $G$ by $e_1, \ldots
e_q$, with each $e_i$ directed toward $v_i$ as seen in
\cref{directedboxproduct} below.

Furthermore, the edges in $G^r$ inherit their direction from the edges
of $G$: if $[v_{\tau(i)}, v_i]$ is a directed edge of $G$, then the
corresponding edge $[\bva v_{\tau(i)}, \bva v_i]$ is a directed edge
of $G^r$ for every $\ba \in \cN_{r-1}$.

 \begin {example} \label{directedboxproduct} We direct the edges of $G$ and $G^2$ of \cref{e:boxproduct} by picking $v_0=x$ as the root 
 
{\small  \begin{tikzpicture} [decoration={markings, 
	mark= at position 0.5 with {\arrow{stealth}}, } ]

\coordinate (Z) at (-1, 0.4);
\coordinate (A) at (0, 0);
\coordinate (B) at (1,0);
\coordinate (C) at (2, 0);
\coordinate (D) at (3, 0);
\draw[black, fill=black] (A) circle(0.05);
\draw[black, fill=black] (B) circle(0.05);
\draw[black, fill=black] (C) circle(0.05);
\draw[black, fill=black] (D) circle(0.05);
\draw[postaction={decorate}] (A) -- (B);
\draw[postaction={decorate}] (B) -- (C);
\draw [postaction={decorate}](C) -- (D);
\node[label = below :$x$] at (A) {};
\node[label = below :$y$] at (B) {};
\node[label = below :$z$] at (C) {};
\node[label = below :$w$] at (D) {};
\node[label = below :$G:$] at (Z) {};

\coordinate (W) at (5, 0.4);
\coordinate (K) at (6, 1);
\coordinate (L) at (7,1 );
\coordinate (M) at (8,1);
\coordinate (N) at (9, 1);
\coordinate (E) at (7, 0);
\coordinate (F) at (8, 0);
\coordinate (G) at (9, 0);
\coordinate (H) at (8,-1);
\coordinate (I) at (9, -1);
\coordinate (J) at (9, -2);
\draw[black, fill=black] (K) circle(0.05);
\draw[black, fill=black] (L) circle(0.05);
\draw[black, fill=black] (M) circle(0.05);
\draw[black, fill=black] (N) circle(0.05);
\draw[black, fill=black] (E) circle(0.05);
\draw[black, fill=black] (F) circle(0.05);
\draw[black, fill=black] (G) circle(0.05);
\draw[black, fill=black] (H) circle(0.05);
\draw[black, fill=black] (I) circle(0.05);
\draw[black, fill=black] (J) circle(0.05);
\draw[postaction={decorate}] (K) -- (L);
\draw[postaction={decorate}] (L) -- (M);
\draw[postaction={decorate}] (M) -- (N);
\draw[postaction={decorate}] (L) -- (E);
\draw[postaction={decorate}] (M) -- (F);
\draw[postaction={decorate}] (N) -- (G);
\draw[postaction={decorate}] (E) -- (F);
\draw[postaction={decorate}] (F) -- (G);
\draw [postaction={decorate}](F) -- (H);
\draw[postaction={decorate}] (G) -- (I);
\draw[postaction={decorate}] (H) -- (I);
\draw[postaction={decorate}] (I) -- (J);
\node[label = above  :$x^2$] at (K) {};
\node[label =  above  :$xy$]  at (L) {};
\node[label = above :$xz$] at (M) {};
\node[label = above :$xw$] at (N) {};
\node[label = {[label distance=-5pt] above right :$y^2$}] at (E) {};
\node[label = {[label distance=-5pt] above right :$yz$}] at (F) {};
\node[label = {[label distance=-5pt] above right :$yw$}] at (G) {};
\node[label = {[label distance=-5pt] above right :$z^2$}] at (H) {};
\node[label = {[label distance=-5pt] above right :$zw$}] at (I) {};
\node[label = {[label distance=-5pt] above right :$w^2$}] at (J) {};
\node[label = below :$G^2:$] at (W) {};
\end{tikzpicture}  }

 \end{example}
 
\begin{lemma}\label{l:j-unique} Let $G$ be a directed  tree  on 
    vertices $v_0,\ldots,v_q$, labeled as in \cref{n:direction}, let
    $r,q>0$, and let $\ba=(a_0,\ldots,a_q),$ $\bb=(b_0,\ldots,b_q) \in
    \cNr$. Then $[\bva, \bvb]$ is a directed edge of $G^r$ if and only
    if for a unique $i \in [q]$, we have
    $$\ba=\bb-\bff_i + \bff_{\tau(i)}$$
    where $\bff_j$ denotes the $(j+1)^{st}$ standard basis vector in $\RR^{q+1}$ as in \eqref{e:f}.
  \end{lemma}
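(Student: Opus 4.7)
My plan is to unwind the definitions on both sides of the biconditional and then establish uniqueness by looking at which coordinates of the difference $\ba-\bb$ are nonzero.

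For the forward direction, suppose $[\bva,\bvb]$ is a directed edge of $G^r$. By \cref{d:graph-powers}, there exist $W=\bv^{\bc}$ with $\bc \in \cN_{r-1}$ and a directed edge $[v_j,v_i]$ of $G$ such that $\bva=Wv_j$ and $\bvb=Wv_i$. By the labeling convention of \cref{n:direction}, every directed edge of the rooted tree $G$ has the form $[v_{\tau(i)},v_i]$ for a unique index $i\in[q]$, so $j=\tau(i)$. Translating the products back to exponent vectors gives $\ba=\bc+\bff_{\tau(i)}$ and $\bb=\bc+\bff_i$, whence $\ba=\bb-\bff_i+\bff_{\tau(i)}$.

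To prove $i$ is the unique such index, suppose $\ba=\bb-\bff_i+\bff_{\tau(i)}=\bb-\bff_j+\bff_{\tau(j)}$, i.e.\ $\bff_{\tau(i)}-\bff_i = \bff_{\tau(j)}-\bff_j$. The vector on each side has exactly two nonzero entries: a $+1$ in position $\tau(\cdot)$ and a $-1$ in position $i$ (respectively $j$), with $\tau(i)<i$ and $\tau(j)<j$ by \cref{n:direction}, so the $+1$ and $-1$ occur at distinct coordinates. Matching the coordinate with value $-1$ forces $i=j$.

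For the reverse direction, suppose $\ba=\bb-\bff_i+\bff_{\tau(i)}$ for some $i\in[q]$, and set $\bc:=\bb-\bff_i=\ba-\bff_{\tau(i)}$. Since $\ba\in\cN_r$ has non-negative integer entries, the $i$-th coordinate of $\bb$ satisfies $b_i=a_i+1\ge 1$, so $\bc$ has non-negative integer entries; its coordinates sum to $r-1$, hence $\bc\in\cN_{r-1}$. Writing $W=\bv^{\bc}$, we have $\bva=Wv_{\tau(i)}$ and $\bvb=Wv_i$, and $[v_{\tau(i)},v_i]$ is a directed edge of $G$, so by \cref{d:graph-powers} the pair $[\bva,\bvb]$ is a directed edge of $G^r$.

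The proof is essentially a careful unpacking of \cref{d:graph-powers} and \cref{n:direction}; the only substantive point is the uniqueness argument, which will be the main obstacle only in the sense that it requires noticing that $\tau(i)<i$ makes the two nonzero entries of $\bff_{\tau(i)}-\bff_i$ identifiable by sign, and that the $-1$ position pins down $i$.
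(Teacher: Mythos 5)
Your proof is correct and follows essentially the same route as the paper's: unwind \cref{d:graph-powers} to produce the vector $\bc\in\cN_{r-1}$ and do the coordinate bookkeeping, with uniqueness coming from the fact that $\tau(i)<i$ (so the $+1$ and $-1$ positions of $\ba-\bb$ cannot be interchanged). Your explicit verification in the reverse direction that $\bc=\bb-\bff_i=\ba-\bff_{\tau(i)}$ lies in $\cN_{r-1}$ is a welcome bit of care that the paper compresses into its ``if and only if'' phrasing.
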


   \begin{proof}  By definition,  $[\bva, \bvb]$ is a directed edge  of
     $G^r$ if and only if for some $i \in [q]$ and $\bc
     =(c_0,\ldots,c_q) \in \cN_{r-1}$,
     $$\bva=\bv^{\bc} \cdot v_{\tau(i)} \qand \bvb=\bv^{\bc} \cdot
     v_i.$$ This happens if and only if  $a_j=b_j=c_j$ when $j \notin
     \{i,\tau(i)\}$, and since $\tau(i) \neq i$, $b_i=c_i+1=a_i+1$ and
     $b_{\tau(i)}+1=c_{\tau(i)}+1=a_{\tau(i)}$. Thus  $$\ba=\bb-\bff_i + \bff_{\tau(i)}.$$

      The uniqueness of $i$
     also follows from the same observation, since the only
     coordinates in which $\ba$ and $\bb$ differ are $i$ and $\tau(i)$, and we
     know $\tau(i) < i$, $b_i=a_i+1$, and $b_{\tau(i)}+1=a_{\tau(i)}$,
     so the roles of $i$ and $\tau(i)$ cannot be reversed.
   \end{proof}

\begin{bfchunk}{Embedding $G^r$ in  $\RR^q$ as $1$-skeleta of cubes.}\label{ss:embed}
We now define an explicit embedding of $G^r$ into the Euclidean space
$\mathbb R^q$ when $G$ is a tree. The embedding chosen is based on the edges of $G$ and
designed so that each edge of $G^r$ will be parallel to an axis of
$\mathbb R^q$. 
\end{bfchunk}

\begin{definition}\label{matrix}
Let $G$ be a directed rooted tree on $q+1$ vertices $v_0,\ldots,v_q$,
labeled as in \cref{n:direction}. Define a $q \times q$ matrix
$\Phi=\Phi(G)=(\Phi_{i,j})$ by:
$$\Phi_{i,j}= \left\{\begin{array}{cc} 1& \text{ if } e_i \text{ lies
  on the unique path from } v_0 \text{ to } v_j, \\ 0 & \text{
  otherwise.} \end{array} \right.$$
\end{definition}

That is, $\Phi$ is the vertex-path incidence matrix whose rows are
indexed by edges $e_1, \ldots, e_q$ and whose columns are indexed by
the vertices $v_1, \ldots, v_q$ with $i,j$ entry indicating whether or
not the $e_i$  is in the path from  $v_0$ to $v_j$ in the
  graph $G$.  Using this matrix, we can embed the vertices of $G^r$
into $\mathbb R^q$.

\begin{example}\label{matrixphi} Let $G$ be the path graph in
  \cref{e:running1} with labels $v_0,v_1,v_2$
replacing the vertex labels $xy,yz,zu$, respectively.
Since $e_1$ lies on the unique path from $v_0$ to $v_1$ and $v_2$, but $e_2$ only lies on the unique path to $v_2$, the matrix $\Phi(G)$ is 
$$\Phi(G)=
\begin{pmatrix}
  1&1\\
  0&1
\end{pmatrix}.
$$
\end{example}

\begin{definition}\label{d:phir}
 Let $G$ be a directed rooted tree on $q+1$ vertices
  $v_0,\ldots,v_q$, labeled as in \cref{n:direction}. Define
$\varphi\colon \ZZ_{\geq 0}^{q+1} \to \mathbb R^{q}$ by
$$\varphi(a_0,a_1,\ldots,a_q)=
\Phi(G)\begin{pmatrix} a_1\\\vdots\\a_q
\end{pmatrix}. $$
For convenience, we write $\varphi(v_0^{a_0}v_1^{a_1}\cdots v_q^{a_q})=\varphi(a_0,\ldots,a_q) = \varphi(\ba)$.
  \end{definition}

\begin{lemma}\label{l:injective}
The function $\varphi_{\big |_{\cNr}}$ is injective. 
\end{lemma}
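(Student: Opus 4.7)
The plan is to exploit the vertex-labelling convention of \cref{n:direction} to show that $\Phi(G)$ is an upper triangular matrix with $1$'s on the diagonal, hence invertible, and then combine this with the defining constraint of $\cN_r$.

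First I would establish the following structural fact about $\Phi=\Phi(G)$: if $\Phi_{i,j}=1$, then $i\le j$, and $\Phi_{i,i}=1$ for every $i\in[q]$. For the first claim, suppose $e_i=[v_{\tau(i)},v_i]$ lies on the unique path from $v_0$ to $v_j$. Then $v_i$ itself lies on this path, so by the monotone labelling in \cref{n:direction}, the index $i$ occurs no later than $j$ along the path from $v_0$ to $v_j$, forcing $i\le j$. For the second claim, the path from $v_0$ to $v_i$ has $e_i=[v_{\tau(i)},v_i]$ as its final edge, so $\Phi_{i,i}=1$. Thus $\Phi$ is upper triangular with all diagonal entries equal to $1$, hence invertible over $\mathbb{Q}$.

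Now suppose $\ba=(a_0,\ldots,a_q)$ and $\bb=(b_0,\ldots,b_q)$ are elements of $\cN_r$ with $\varphi(\ba)=\varphi(\bb)$. By \cref{d:phir} this means
$$\Phi\begin{pmatrix}a_1\\ \vdots \\ a_q\end{pmatrix}=\Phi\begin{pmatrix}b_1\\ \vdots \\ b_q\end{pmatrix}.$$
Since $\Phi$ is invertible, we obtain $a_i=b_i$ for all $i\in\{1,\ldots,q\}$. The constraint $a_0+a_1+\cdots+a_q=r=b_0+b_1+\cdots+b_q$ then forces $a_0=b_0$ as well, so $\ba=\bb$.

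The only non-routine step is the triangularity argument, which hinges on the labelling scheme ensuring that indices increase along paths from the root; once this is in hand, invertibility of $\Phi$ and the simplex constraint on $\cN_r$ give the result immediately. I do not anticipate any real obstacle beyond carefully invoking the labelling hypothesis.
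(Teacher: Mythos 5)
Your proof is correct and follows essentially the same route as the paper: show $\Phi(G)$ is upper triangular with unit diagonal (using the monotone labelling of \cref{n:direction}), conclude invertibility, and recover $a_0$ from the constraint $a_0+\cdots+a_q=r$. No gaps.
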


\begin{proof} First notice that if $(a_0,\ldots,a_q) \in \cNr$, then
    $a_0=r-(a_1+ \cdots+a_q)$ and so the value of $a_0$ is uniquely
    determined by the vector $(a_1,\ldots,a_q)$.

  By \cref{n:direction}, every vertex $v_j$ appearing on the 
unique path from $v_0$ to $v_i$ satisfies $j<i$. Thus, by the
labeling of the edges using their terminal vertex, the matrix $\Phi$
is an upper triangular matrix. In addition, the diagonal entries are
all equal to $1$ since by definition, $e_i$ is the final edge in the
unique path from $v_0$ to $v_i$. Therefore $\Phi(G)$ is a nonsingular
matrix, and as a result   $\varphi_{\big |_{\cNr}}$ is injective.
\end{proof}

Notice that under the embedding $\varphi$, the $i^{th}$ coordinate of the point $\varphi(v_0^{a_0}\cdots v_q^{a_q})$ in $\RR^q$ is the total number of times,
with multiplicity, that the edge $e_i$ appears on the path from $v_0$ to any
vertex $v_k$ with $a_k \geq 1$ and $k \geq 1$.

  \begin{lemma}\label{l:differbyj} Let $G$ be a tree, labeled as in \cref{n:direction}, $r,q>0$, and
    $\ba,\bb \in \cNr$. If $[\bva, \bvb]$ is a directed edge of $G^r$,
    and $i$ is as in \cref{l:j-unique}, then
    $$\varphi(\bb)=\varphi(\ba)+\be_i,$$ where $\be_i$ denotes the
      $i^{th}$ standard basis vector in $\RR^q$.
  \end{lemma}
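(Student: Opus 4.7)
The plan is to compute $\varphi(\bb)-\varphi(\ba)$ directly, reducing it via the linearity of $\varphi$ to a statement about the columns of the incidence matrix $\Phi(G)$, and then invoke the recursive structure of paths in the rooted tree $G$.

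First, by \cref{l:j-unique}, the hypothesis that $[\bva,\bvb]$ is a directed edge gives $\bb = \ba + \bff_i - \bff_{\tau(i)}$ in $\ZZ^{q+1}$. Since $\varphi$ depends only on the last $q$ coordinates of its input and is given by multiplication by $\Phi(G)$, I would write
$$\varphi(\bb) - \varphi(\ba) = \Phi(G)\begin{pmatrix} b_1 - a_1 \\ \vdots \\ b_q - a_q \end{pmatrix}.$$
Here one has to handle two cases, distinguished by whether $\tau(i)=0$ or $\tau(i)\ge 1$, because $\bff_{\tau(i)}$ contributes to the dropped $0$-th coordinate in the first case and not in the second.

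Second, if $\tau(i)\geq 1$, the right-hand column vector is the difference $\be_i - \be_{\tau(i)}$ of standard basis vectors in $\RR^q$, so $\varphi(\bb)-\varphi(\ba)$ equals the $i$-th column of $\Phi(G)$ minus the $\tau(i)$-th column. If $\tau(i)=0$, the column vector is simply $\be_i$, and $\varphi(\bb)-\varphi(\ba)$ equals the $i$-th column of $\Phi(G)$. In either case the problem reduces to understanding the columns of $\Phi(G)$.

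Third, I would invoke the key combinatorial identity in the tree: since $e_i=[v_{\tau(i)},v_i]$ is the terminal edge of the unique path from $v_0$ to $v_i$, that path is precisely the path from $v_0$ to $v_{\tau(i)}$ followed by $e_i$ (where the path to $v_0$ is empty). Translated into the entries of $\Phi(G)$, this means that the $i$-th column of $\Phi(G)$ equals the $\tau(i)$-th column (read as the zero vector when $\tau(i)=0$) plus the standard basis vector $\be_i\in\RR^q$. Substituting into the expressions from the previous step yields $\varphi(\bb)-\varphi(\ba)=\be_i$ in both cases, completing the proof.

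The argument is essentially a bookkeeping check, and there is no serious obstacle; the one subtlety to watch is the asymmetric treatment of the $0$-th coordinate by $\varphi$, which is why the split into the cases $\tau(i)=0$ and $\tau(i)\geq 1$ is natural. Both cases collapse to the same conclusion once the recursive column identity for $\Phi(G)$ is in hand.
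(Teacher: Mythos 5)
Your proof is correct and follows essentially the same route as the paper: both arguments use \cref{l:j-unique} to write $\bb=\ba+\bff_i-\bff_{\tau(i)}$, apply linearity of $\varphi$, and invoke the column identity $\Phi(G)_i=\Phi(G)_{\tau(i)}+\be_i$ (with the $\tau(i)=0$ column read as zero) coming from the fact that the path to $v_i$ extends the path to $v_{\tau(i)}$ by $e_i$. Your explicit case split on $\tau(i)=0$ versus $\tau(i)\ge 1$ is just a more spelled-out version of the paper's convention $\Phi(G)_0=\mathbf{0}$.
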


   \begin{proof} Let $\ba=(a_0,\ldots,a_q)$ and  $\bb=(b_0,\ldots,b_q)$.
   If $[\bva, \bvb]$ is a directed edge of $G^r$, then using the
   unique $i$ from \cref{l:j-unique}, the unique path in $G$ from
   $v_0$ to $v_i$ is an extension of the unique path from $v_0$ to
   $v_{\tau(i)}$ by the edge $e_i$.

   \begin{center}
 {\small   \begin{tikzpicture}
\coordinate (A) at (3, 1);
\coordinate (B) at (4,1);
\coordinate (C) at (5, 1);
\coordinate (D) at (6, 1);
\draw[black, fill=black] (A) circle(0.05);
\draw[black, fill=black] (B) circle(0.05);
\draw[black, fill=black] (C) circle(0.05);
\draw[black, fill=black] (D) circle(0.05);
\draw[-] (A) -- (B);
\draw[dashed] (B) -- (C);
\draw[-] (C) -- node[above]{$e_i$} (D);
\node[label = below :$v_0$] at (A) {};
\node[label = below :$v_{\tau(i)}$] at (C) {};
\node[label = below :$v_i$] at (D) {};
   \end{tikzpicture} }
   \end{center}
   Therefore,  for $i \in [q]$,  if $\Phi(G)_i$ denotes the $i^{th}$ column of the matrix, 
   $$\Phi(G)_i= \left \{ \begin{array}{cl}
     \be_i & \qif \tau(i)=0 \\
    \Phi(G)_{\tau(i)} + \be_i & \qif \tau(i) \geq 1.
\end{array} \right.$$
  By \cref{l:j-unique}
  we have
   \begin{align*} \varphi(\bb)=&\Phi(G)\begin{pmatrix}
 		b_1 &\cdots &b_q \end{pmatrix}^T \\
 	=&\Phi(G)\begin{pmatrix}
 		a_1&\cdots &a_q \end{pmatrix}^T - \Phi(G)_{\tau(i)} + 
 \Phi(G)_i\\
 	= &\varphi(\ba)+ \be_i,
 \end{align*} where when  $\tau(i)=0$, we set $\Phi(G)_0 = {\bf 0}$ for convenience.
      \end{proof}

   \begin{definition}[{\bf The directed graph $\varphi(G^r)$}]
     Let $G$ be a directed rooted tree as in
       \cref{n:direction}. We define $\varphi(G^r)$ to be the
     directed graph embedded in $\RR^q$ whose vertices are
     $\varphi(\ba)$ where $\ba \in \cNr$ and whose edges are
     $[\varphi(\ba), \varphi(\bb)]$ where $[\bva, \bvb]$ is an edge of
     $G^r$. By \cref{l:differbyj} we must have
     $\varphi(\bb)=\varphi(\ba)+ \be_i$ for a unique $i \in
     [q]$. \end{definition}

   To summarize we observe the following correspondence between edges
   of the directed graphs $G^r$ and $\varphi(G^r)$. If $\ba,\bb \in \cNr$, then
 for a unique  $j \in [q]$
   \begin{equation} \label{e:edges}
     \begin{split}
     [\varphi(\ba), \varphi(\bb)] \in E(\varphi(G^r))&
     \iff  \varphi(\bb)=\varphi(\ba)+ \be_i \\
     &\iff \bb= \ba +\bff_i - \bff_{\tau(i)} \\
     &\iff [\bva, \bvb] \in E(G^r)
     \end{split}
   \end{equation}
   where $\be_i$ and $\bff_i$ are unit 
   vectors in $\RR^q$ and $\RR^{q+1}$ respectively, as in \cref{d:graph-powers}.

\begin{example}\label{ex:path-0} 
Let $G$ be the graph in \cref{e:running1} with matrix $\Phi(G)=
\begin{pmatrix}
  1&1\\
  0&1
\end{pmatrix}$ computed in \cref{matrixphi}. The vertex set of $G^2$ is $\{v_0^2, v_0v_1, v_0v_2, v_1^2, v_1v_2, v_2^2\}$ with corresponding exponent vectors
  $$\{(2,0,0), (1,1,0), (1,0,1),(0,2,0), (0,1,1), (0,0,2)\}.$$ 
  We thus have 
  $$
  \varphi(2,0,0)=
  \begin{pmatrix}
  1&1\\
  0&1
\end{pmatrix}
  \begin{pmatrix} 0\\0 \end{pmatrix}
  =  \begin{pmatrix} 0\\0 \end{pmatrix}
\qand
  \varphi(0,1,1)=
  \begin{pmatrix}
  1&1\\
  0&1
\end{pmatrix}
  \begin{pmatrix} 1\\1 \end{pmatrix}
  =  \begin{pmatrix} 2\\1 \end{pmatrix}.
    $$
  Similarly we have
  $$\varphi(1,1,0)=\begin{pmatrix} 1\\ 0\end{pmatrix},\ 
\varphi(1,0,1)=\begin{pmatrix}1 \\1 \end{pmatrix},\ 
\varphi(0,2,0)=\begin{pmatrix}2 \\0 \end{pmatrix},\ 
\varphi(0,0,2)=\begin{pmatrix} 2\\2 \end{pmatrix}.$$

 The graph $\varphi(G^2)$ is below.

 \begin{center}
	\begin{tikzpicture} [decoration={markings, 
			mark= at position 0.5 with {\arrow{stealth}},
		}
		] 	
		
		\coordinate (A) at (-3, 0);
		\coordinate (B) at (-2, 1);
		\coordinate (C) at (-1, 1);
		\coordinate (D) at (-2, 0);
		\coordinate (E) at (-1, 0);
		\coordinate (F) at (-1, 2);
		\coordinate (X) at (1,0);
		\coordinate (Y) at (-3,3);
		\draw[black, fill=black] (A) circle(0.04);
		\draw[black, fill=black] (B) circle(0.04);
		\draw[black, fill=black] (C) circle(0.04);
		\draw[black, fill=black] (D) circle(0.04);
		\draw[black, fill=black] (E) circle(0.04);
		\draw[black, fill=black] (F) circle(0.04);
		\draw[postaction={decorate}] (D) -- (B);
		\draw[postaction={decorate}](B) -- (C);
		\draw[postaction={decorate}] (E) -- (C);
		\draw[postaction={decorate}](A) -- (D);
		\draw [postaction={decorate}](D) -- (E);
		\draw[postaction={decorate}] (C) -- (F);
		\draw[->, dashed] (A) -- (X);
		\draw[->, dashed] (A) -- (Y);
		\node[label = {[label distance=-5pt] below left :\tiny $\varphi(v_0^2)$}] at (A) {};
		\node[label = {[label distance=-3pt]  below :\tiny $\varphi(v_0v_1)$}] at (D) {};
		\node[label = right :\tiny{$\varphi(v_1v_2)$}]at (C) {};
		\node[label = {[label distance=-3pt]  left :\tiny $\varphi(v_0v_2)$}] at (B) {};
		\node[label = {[label distance=-5pt]  below right :\tiny $\varphi(v_1^2)$}] at (E) {};
		\node [label = right :\tiny{$\varphi(v_2^2)$}] at (F) {};
	\end{tikzpicture}
\end{center}
\end{example}

\begin{example} 
\label{e:G^3}Let $G$ be the directed tree with $4$ vertices that
  starts at the root $v_0$ and consists of two distinct paths as shown in the figure below. The directed edges of $G$ have the form 
  $[v_j,v_i]$ with $j<i$.
  
 $$ \small{  \begin{tikzpicture} [decoration={markings, 
			mark= at position 0.5 with {\arrow{stealth}}, } ]
		
		\coordinate (Z) at (-1, 0.4);
		\coordinate (A) at (0, 0.4);
		\coordinate (B) at (1,0);
		\coordinate (C) at (2, 0);
		\coordinate (D) at (3, 0);
		\draw[black, fill=black] (A) circle(0.05);
		\draw[black, fill=black] (B) circle(0.05);
		\draw[black, fill=black] (C) circle(0.05);
		\draw[black, fill=black] (D) circle(0.05);
		\draw[postaction={decorate}] (B) -- (A);
		\draw[postaction={decorate}] (B) -- (C);
		\draw [postaction={decorate}](C) -- (D);
		\node[label = below :$v_1$] at (A) {};
		\node[label = below :$v_0$] at (B) {};
		\node[label = below :$v_2$] at (C) {};
		\node[label = below :$v_3$] at (D) {};
		\node[label = below :$G:$] at (Z) {};
\end{tikzpicture}}$$
  The matrix $\Phi(G)$ is
$$\Phi(G)= \begin{pmatrix} 1&0&0\\ 0&1&1 \\ 0&0&1 \end{pmatrix}$$ The
  figure below shows the graph $\varphi(G^3)$.

\begin{center}
	\begin{tikzpicture}[x={(0.6cm,0cm)}, y={(0.25cm,0.25cm)}, z={(0cm,0.45cm)}, decoration={markings, 
			mark= at position 1 with {\arrow{stealth}},
		}
		] 	
		\coordinate (dx) at (1,0,0);
		\coordinate (dy) at (0,1,0);
		\coordinate (dz) at (0,0,1);
		
		\coordinate (P111) at (0,0,0);
		\coordinate (P112) at (0, 0,2);
		\coordinate (P113) at (2, 0, 0); 
		\coordinate (P114) at (2, -2, 0);
		\coordinate (P122) at  (0, 0,4);
		
		\coordinate (P123) at (2, 0, 2);
		\coordinate (P124) at (2, -2, 2);
		\coordinate (P222) at (0, 0, 6);
		\coordinate (P223) at (2, 0, 4);
		\coordinate (P224) at (2,- 2,4);
		
		\coordinate (P133) at (4, 0, 0);
		\coordinate (P134) at (4, -2, 0);
		\coordinate (P144) at (4, -4,0);
		
		\coordinate (P233) at (4, 0,2);
		\coordinate (P234) at (4, -2,2);
		\coordinate (P244) at (4, -4,2);
		\coordinate (P333) at (6, 0,0);
		\coordinate (P334) at (6, -2,0 );
		\coordinate (P344) at (6, -4,0 );
		\coordinate (P444) at (6, -6,0 );
		
		\node at (P111)  [label= left:\tiny{$\varphi(v_0^3$)}] {};
		\node[label = {[label distance=-4pt]  left :\tiny $\varphi(v_0^2v_1)$}] at (P112) {};
		\node at (P113) [label=left:\tiny{}] {};
		\node[label = {[label distance=-4pt]  left :\tiny $\varphi(v_0^2v_3)$}] at (P114) {};
		\node[label = {[label distance=-4pt]  left :\tiny $\varphi(v_0v_1^2)$}] at (P122) {};
		\node at (P123) [label= \tiny{}] {};
		\node at (P124) [label= \tiny{}] {};
		\node at (P222) [label=left:\tiny{$\varphi(v_1^3$)}] {};
		\node[label = {[label distance=-4pt]  right :\tiny $\varphi(v_1^2v_2)$}] at (P223) {};
		\node at (P224) [label=right:\tiny{}] {};
		\node at (P133) [label=\tiny{}] {};
		\node at (P134) [label=  right:\tiny{}] {};
		\node[label = {[label distance=-4pt]  left :\tiny $\varphi(v_0v_3^2)$}] at (P144) {};
		\node[label = {[label distance=-4pt]  right :\tiny $\varphi(v_1v_2^2)$}] at (P233) {};
		\node at (P234) [label=right: \tiny{}] {};
		\node at (P244) [label=\tiny{}] {};
		\node at (P333) [label= \tiny{$\varphi(v_2^3$)}] {};
		\node[label = {[label distance=-3pt]  right :\tiny $\varphi(v_2^2v_3)$}] at (P334) {};
		\node[label = {[label distance=-3pt]  right :\tiny $\varphi(v_2v_3^2)$}] at (P344) {};
		\node at (P444) [label=right:\tiny{$\varphi(v_3^3$)}] {};
		
		\draw[black, fill=black] (P111) circle(0.04);	 	
		\draw[black, fill=black] (P112) circle(0.04);
		\draw[black, fill=black] (P122) circle(0.04);
		\draw[black, fill=black] (P222) circle(0.04);
		
		\draw[postaction={decorate}](P111) -- (P112);
		\draw[postaction={decorate}](P112) -- (P122);
		\draw[postaction={decorate}](P122) -- (P222);
		\draw[postaction={decorate}](P122) -- (P223);

		\draw[postaction={decorate}](P123) -- (P223);
		\draw[postaction={decorate}](P114) -- (P124);
		\draw[postaction={decorate}] (P124) -- (P224);
		\draw[postaction={decorate}](P223) -- (P224);
		\draw[postaction={decorate}](P123) -- (P124);

		\draw[postaction={decorate}](P123) -- (P233);
		\draw[postaction={decorate}](P233) -- (P234);
		\draw[postaction={decorate}] (P124) -- (P234);
		\draw[postaction={decorate}](P123) -- (P124);
		
		\draw[postaction={decorate}] (P114) -- (P124);
		\draw[postaction={decorate}](P133) -- (P233);
		\draw[postaction={decorate}] (P134) -- (P234);
		\draw[dashed][postaction={decorate}](P113) -- (P123);

		\draw [dashed][postaction={decorate}] (P113) -- (P133);
		\draw [dashed][postaction={decorate}] (P113) -- (P114);
		
		\draw[postaction={decorate}](P133) -- (P134);

		\draw[postaction={decorate}](P133) -- (P333);
		\draw[postaction={decorate}](P134) -- (P334);
		\draw[postaction={decorate}](P333) -- (P334);
		\draw[postaction={decorate}] (P334) -- (P344);
		\draw[postaction={decorate}](P144) -- (P344);
		\draw[postaction={decorate}] (P134) -- (P144);
		\draw[postaction={decorate}](P144) -- (P244);
		\draw[postaction={decorate}] (P234) -- (P244);
		\draw[postaction={decorate}](P344) -- (P444);

		\coordinate (R) at (1.1, 0,0 );
		\coordinate (S) at (1, 0,2 );
		\coordinate (T) at (3, -2,0 );	
		
		\node at (R) [label=right:\tiny{}] {}; 
		\node at (R) [label=right:\tiny{}] {};
		\node at (R) [label=right:\tiny{}] {};
		
		\draw[postaction={decorate}] (P111) -- (R);
		\draw[dashed] (R) -- (P113);
		\draw[postaction={decorate}](P112) -- (S);
		\draw[dashed] (S) -- (P123);
		\draw[postaction={decorate}] (P114) -- (T);
		\draw[dashed] (T) -- (P134);

		\coordinate (U) at (-0.4,- 6, 0 );
		\coordinate (V) at (8,0 ,0 );
		\coordinate (W) at (0, 0,8 );	
		
		\node at (U) [label=right:\tiny{}] {}; 
		\node at (V) [label=right:\tiny{}] {};
		\node at (W) [label=right:\tiny{}] {};
		
		\draw[->, dashed] (P111) -- (U);
		\draw[->, dashed] (P111) -- (V);
		\draw[->, dashed] (P111) -- (W);
	
	\end{tikzpicture}
 \end{center}
\end{example}

\begin{bfchunk}{Building the cubical polyhedral cell complex
  $\ol{G^r}$ in  $\RR^q$.}
  \label{ss:graphpowers}  Now that we have a directed graph $G^r$, we focus on
     building an acyclic polyhedral cell complex which has $G^r$
     as its $1$-skeleton. 
     Using the concept of a sink (see \cref{d:cube}), we identify the $1$-skeleta of cubes
   that appear as induced subgraphs of $\varphi(G^r)$. Below, by the
   {\bf support} of $\bva$ or of $\ba=(a_0,a_1,\ldots,a_q) \in
   \ZZ_{\geq 0}^{q+1}$ we mean the set $$\supp(\bva)=\supp(\ba)=\{j>0
   \mid a_j \neq 0\} \subseteq [q].$$
   \end{bfchunk}

\begin{definition}[{\bf The subgraph $\cC(\bb,B)$ of $\varphi(G^r)$ }]\label{d:cubes}
  If $G$ is a directed rooted tree labeled as in \cref{n:direction},
  $r,q>0$, $\bb \in \cNr$ and $\varnothing \neq B \subseteq
  \supp(\bb)$, then  we denote by $\cC(\bb,B)$ the induced directed subgraph  of
  $\varphi(G^r)$ on the vertex set
     $$\{\varphi(\bb)-\sum_{i\in B'} \be_{i} \mid \varnothing \subseteq B' \subseteq
  B\}.$$
 \end{definition}

\begin{proposition}\label{p:cubes}
Let $G$ be a directed rooted tree as in \cref{n:direction},
  $r,q>0$, $\bb \in \cNr$ and $\varnothing \neq B \subseteq
  \supp(\bb)$.
    Then $\cC(\bb, B)$ is the $1$-skeleton of a $|B|$-dimensional
     cube in $\RR^q$ with source $\varphi(\ba)$, where
      $$ \ba=\bb - \sum_{i \in B}(\bff_i - \bff_{\tau(i)}), $$ sink
     $\varphi(\bb)$, and edges
     $$\big [\varphi(\ba)+\sum_{i\in B'} \be_{i} , \ \varphi(\ba)+\sum_{i\in
     	B'} \be_{i} + \be_k \big ] \qfor \varnothing \subseteq B' \subsetneq B \qand k \in B \ssm
     B'.$$  
 \end{proposition}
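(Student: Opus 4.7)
The plan is to prove the proposition in three stages: identify the vertex set of $\cC(\bb, B)$ with the vertex set of the claimed cube, verify that every edge of the cube is an edge of $\cC(\bb, B)$, and argue that no extra edges appear, while identifying the source and the sink along the way.

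First, I would check that $\ba \in \cNr$. The vector $\bff_i - \bff_{\tau(i)}$ has zero coordinate sum, so the total of $\ba$ is still $r$; non-negativity follows because $B \subseteq \supp(\bb)$ guarantees $b_j \geq 1$ for each $j \in B$, while the $\bff_{\tau(i)}$ terms only increase the other coordinates. A direct computation using the identity $\Phi(G)_i - \Phi(G)_{\tau(i)} = \be_i$ (with $\Phi(G)_0 = \mathbf{0}$) that appears in the proof of \cref{l:differbyj} then yields $\varphi(\bb) = \varphi(\ba) + \sum_{i \in B} \be_i$. Consequently the vertex set of $\cC(\bb, B)$, namely $\{\varphi(\bb) - \sum_{i \in B'} \be_i \mid B' \subseteq B\}$, can be rewritten as $\{\varphi(\ba) + \sum_{i \in B \setminus B'} \be_i \mid B' \subseteq B\}$. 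By the linear independence of $\{\be_i\}_{i \in B}$ there are exactly $2^{|B|}$ distinct points, matching the vertex set of a $|B|$-dimensional cube with source $\varphi(\ba)$ and sink $\varphi(\bb)$ in the sense of \cref{d:cube}.

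Next, for each $B' \subsetneq B$ and each $k \in B \ssm B'$, I would show that $[\varphi(\bd), \varphi(\bd) + \be_k]$ is an edge of $\varphi(G^r)$, where $\bd = \bb - \sum_{i \in B \ssm B'}(\bff_i - \bff_{\tau(i)})$. By \eqref{e:edges} this reduces to showing that $\bd$ and $\bd + \bff_k - \bff_{\tau(k)}$ both lie in $\cNr$, and the delicate ingredient is the inequality $d_{\tau(k)} \geq 1$. This coordinate bookkeeping is the main obstacle. I would compute
\[
d_{\tau(k)} = b_{\tau(k)} + |\{i \in B \ssm B' : \tau(i) = \tau(k)\}| - |\{i \in B \ssm B' : i = \tau(k)\}|
\]
and split into two cases. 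If $\tau(k) \notin B \ssm B'$, the last cardinality is $0$ while the middle one is at least $1$ (it contains $k$ itself, since $\tau(k) = \tau(k)$), so $d_{\tau(k)} \geq b_{\tau(k)} + 1 \geq 1$. If $\tau(k) \in B \ssm B'$, both cardinalities contribute and the $-1$ they produce is absorbed by $b_{\tau(k)} \geq 1$, which holds because $\tau(k) \in B \subseteq \supp(\bb)$. Non-negativity of the remaining coordinates of $\bd$ is checked exactly as for $\ba$ in the first step.

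Finally, \eqref{e:edges} implies that every edge of $\varphi(G^r)$ joins two vertices whose difference is a single standard basis vector $\be_i$. Within the vertex set of $\cC(\bb, B)$ such pairs correspond exactly to subsets $B'$ and $B' \cup \{k\}$ with $k \in B \ssm B'$, which are precisely the cube edges; hence the induced subgraph $\cC(\bb, B)$ contains no spurious edges. Because every such edge is directed in a $+\be_i$ direction, the vertex with $B' = \varnothing$, namely $\varphi(\ba)$, is a source and the vertex with $B' = B$, namely $\varphi(\bb)$, is a sink, completing the identification with the claimed cube.
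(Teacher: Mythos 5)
Your proposal is correct and follows essentially the same route as the paper's proof: it identifies the vertex set via $\varphi(\bb)=\varphi(\ba)+\sum_{i\in B}\be_i$, uses \eqref{e:edges} to show the induced edges are exactly the cube edges, and reads off the source and sink from the directions. The only difference is that you spell out the non-negativity bookkeeping (in particular the check $d_{\tau(k)}\ge 1$) that the paper leaves implicit when asserting the intermediate points lie in $\varphi(G^r)$, which is a harmless elaboration rather than a new approach.
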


\begin{proof}
   
     Let $\bb=(b_0,b_1,\ldots,b_q)$ and set
     \begin{equation}\label{equ:ab}
   \ba=\bb+\sum_{i \in B} \bff_{\tau({i})}-\sum_{i \in
     B}\bff_{i} \end{equation} as in the statement of the
     theorem. Since $B \subseteq \supp(\bb)$,
     $\ba=(a_0,a_1,\ldots,a_q)$ in $\cNr$ and $\varphi(\ba) \in
     \varphi(G^r)$.  It follows that $a_{\tau(i)}>0$ for every $i \in
     B$, and by \cref{l:j-unique} and \cref{l:differbyj},
     $\varphi(\ba) = \varphi(\bb) - \sum_{i \in B} \be_i$. Therefore
  $$V=\{\varphi(\ba)+\sum_{i\in B'} \be_{i} \mid \varnothing \subseteq
     B' \subseteq B\}.$$

     Moreover, for each $B' \subseteq B$,
 $$\varphi(\ba) + \sum_{i \in B'} \be_i = \varphi \big ( \ba +\sum_{i
   \in B'}(\bff_i-\bff_{\tau({i})})\big )\in \varphi(G^r).$$ By
 \eqref{e:edges}, $[\bv^\bc, \bv^\bd]$ is an edge of $G^r$ if and only if $\bd
 = \bc + \bff_k - \bff_{\tau(k)}$ for some $k$, or equivalently
 $\varphi(\bd) = \varphi(\bc) + \be_k$. Thus the edges of the induced
 graph on vertex set $V$ are precisely
   $$\big [\varphi(\ba)+\sum_{i\in B'} \be_{i} ,
   \ \varphi(\ba)+\sum_{i\in B'} \be_{i} + \be_k \big ] \qfor
 \varnothing \subseteq B' \subsetneq B \qand k \in B \ssm B'.$$ By
 \cref{d:cube}, these are the edges of a  $|B|$-dimensional cube
   in $\RR^q$ with source $\varphi(\ba)$ and sink
   $\varphi(\bb)=\varphi(\ba + \sum_{i \in B} \be_i)$, as desired.
  \end{proof}

Consider the graph $\varphi(G^r)$ as a union of $1$-skeleta
of cubes embedded in $\RR^q$, as described in \cref{p:cubes}.
Our next
step is to fill each of these cubes to build a polyhedral cell
complex. With this goal in mind, we set the following notation.

\begin{notation} \label{filledC} Let $\ocC(\bb,B)$  with $B \subseteq \supp(\bb)$ denote  the solid cube whose $1$- skeleton is $\cC(\bb,B)$ from Proposition \ref{p:cubes}.
 Let $\ol{G^r}$ be the collection of the
solid cubes $\ol{\cC}(\bb,B)$  for each $\bb \in \cNr$ and  $B \subseteq \supp(\bb)$. More precisely
\begin{equation}\label{e:cell}
  \ol{G^r}=\bigcup_{\tiny \begin{array}{cc}\bb \in \cNr\\ B \subseteq \supp(\bb) \end{array}}\{ \ol{\cC}(\bb,B)\}. 
\end{equation}
Note that if $B=\varnothing$, then we set $\ol{\cC}(\bb,B)=\cC(\bb,B)$ to be
the $0$-dimensional cube $\{\varphi(\bb)\}$.
\end{notation}

\begin{example}\label{ex:path-2}
If $\varphi(G^r)$ is as in \cref{ex:path-0}, then $\ol{G^2}$ is shown below.  The $2$-cell is $\ocC((0,1,1),\{1,2\})$, with $$
\text{source} \quad \varphi(1,1,0)=\varphi(v_0v_1)=
\begin{pmatrix}
  1\\0
\end{pmatrix}
\quad \text{and sink} \quad
 \varphi(0,1,1)=\varphi(v_1v_2)=
\begin{pmatrix}
  2\\1
\end{pmatrix}.$$ Taking ${\bf a}=(1,1,0)$, the edges of the $2$-cube $\ocC((0,1,1),\{1,2\})$ are $$\big [\varphi(\ba) , \ \varphi(\ba)+ \be_{1}], \big [\varphi(\ba) , \ \varphi(\ba)+ \be_{2}], \big [\varphi(\ba) + \be_{1} , \ \varphi(\ba)+ \be_{1}+ \be_{2}], \big [\varphi(\ba) + \be_{2} , \ \varphi(\ba)+ \be_{2}+ \be_{1}],$$
or equivalently, the edges can be written as
$$[(1,0), (2,0)], [(1,0), (1,1)], [(2,0), (2,1)], [(1,1), (2,1)].$$
Similarly, the 1-cell $\ocC((0,1,1),\{1\})$ is the top edge of the square, with source $\varphi(1,0,1) = \varphi(v_0v_2) = (1,1)^T$.
\begin{center}
	\begin{tikzpicture} [decoration={markings, 
			mark= at position 0.5 with {\arrow{stealth}},
		}
		] 	
		\coordinate (A') at (3, 0);
		\coordinate (B') at (4, 1);
		\coordinate (C') at (5, 1);
		\coordinate (D') at (4, 0);
		\coordinate (E') at (5, 0);
		\coordinate (F') at (5, 2);
		\coordinate (X') at (6.5,0);
		\coordinate (Y') at (3,2.5);
		\draw[black, fill=black] (A') circle(0.04);
		\draw[black, fill=black] (B') circle(0.04);
		\draw[black, fill=black] (C') circle(0.04);
		\draw[black, fill=black] (D') circle(0.04);
		\draw[black, fill=black] (E') circle(0.04);
		\draw[black, fill=black] (F') circle(0.04);
		\draw[postaction={decorate}](D') -- (B');
		\draw[postaction={decorate}] (B') -- (C');
		\draw[postaction={decorate}](E') -- (C');
		\draw[postaction={decorate}] (A') -- (D');
		\draw[postaction={decorate}](D') -- (E');
		\draw[postaction={decorate}] (C') -- (F');
		\draw[->, dashed] (A') -- (X');
		\draw[->, dashed] (A') -- (Y');
		\node[label = {[label distance=-5pt] below left :\tiny $\varphi(v_0^2)$}] at (A') {};
		\node[label = {[label distance=-4pt]  left :\tiny $\varphi(v_0v_2)$}] at (B') {};
		\node[label = right :\tiny{$\varphi(v_1v_2)$}]at (C') {};
		\node[label = {[label distance=-3pt]  below :\tiny $\varphi(v_0v_1)$}] at (D') {};
		\node[label = {[label distance=-5pt]  below right :\tiny $\varphi(v_1^2)$}] at (E') {};
		\node [label = right :\tiny{$\varphi(v_2^2)$}] at (F') {};
		\draw[fill=lightgray] (B') -- (C') -- (E')  -- (D') -- cycle;
	\end{tikzpicture}
\end{center}
\end{example}

\begin{bfchunk}{Constructing $\ol{G^{r+1}}$ from $\ol{G^r}$ when $r\ge q$.}\label{ss:covering} Note that, for any $\bb\in \cN_r$ and $B\subseteq \supp(\bb)$,
  $$\ocC(\bb, B))=\ocC(\bb+\bff_0, B)\,, {\text{ where }} \bff_0 = (1,
  0, \dots, 0),$$ hence there is an embedding $\ol{G^r}\subseteq
  \ol{G^{r+1}}$.  Since $\bb \in \cN_r$, we always have
    $|\supp(\bb)| \leq r$. On the other hand when $r\le q$, there
  exists at least one $\bb\in \cN_r$ such that $|\supp(\bb)|=r$, hence
  $\ocC(\bb,\supp(\bb))$ is a maximal, $r$-dimensional cube in
  $\ol{G^r}$. Thus, each iteration of the construction of $\ol{G^r}$
  adds new, higher dimensional cubes, as long as $r\le q$.  We refer
  the reader to the picture of $\varphi(G^3)$ in \cref{e:G^3} in order
  to visualize the $3$-dimensional cube in $\ol{G^3}$ for that
  example. On the other hand, the dimension of the cubes cannot exceed
  the dimension $q$ of the space, so the maximal dimension of a cube
  stabilizes at $q$. This does not mean that $\ol{G^r}$ and
  $\ol{G^{r+1}}$ become equal, but it turns out that, when $r \geq q$,
  all the cubes of $\ol{G^{r+1}}$ can be described as translations of
  cubes in $\ol{G^{r}}$, as discussed below. 

We denote by  $\ocC(\bb, B)+{\bx}$ the translation by the vector $\bx\in \mathbb R^q$ of a cell $\ocC(\bb, B)\in \ol{G^r}$.  For any $0\le j\le q$, $\bb\in \cNr$, and $B\subseteq \supp(\bb)$, observe that 
\begin{equation}
\label{e:translate-c}
\ocC(\bb, B)+ \varphi(\bff_j)=\ocC(\bb+\bff_j, B).
\end{equation}
 In particular, translation by the vector $\varphi(\bff_j)$ in $\mathbb R^q$ induces a cellular map $t_j\colon \ol{G^r}\to \ol{G^{r+1}}$, satisfying 
$$
t_j(\ocC(\bb, B)) =\ocC(\bb+\bff_j, B)\in \ol{G^{r+1}}.
$$
When $r\ge q$, notice that all the cubes in $\ol{G^{r+1}}$ are obtained as translations of cubes in $\ol{G^{r}}$, namely: 
\begin{equation}
\label{G^r-cover}
\bigcup_{0\le j\le q} t_j(\ol{G^r})=\ol{G^{r+1}} \qforall r\ge q\,.
\end{equation}
Indeed, let $\ba\in \cN_{r+1}$ and $A\subseteq \supp(\ba)$. If $r\ge
q$, then, since $a_0+\dots a_q=r+1$, either $a_i>1$ for some $i\in
[q]$ or else $a_0\ge 1$. When $a_0\ge 1$ and $a_j\le 1$ for all $j\in
[q]$, set $i=0$. With $i$ as above, one has $\ba -\bff_i\in \cN_r$,
$A\subseteq \supp(\ba)=\supp(\ba-\bff_i)$, and
$$
\ocC(\ba, A)=t_i(\ocC(\ba-\bff_i, A))\in t_i(\ol{G^r}),
$$
which proves \cref{G^r-cover}. 
\end{bfchunk}

\begin{example}
In the context of Example \ref{ex:path-2}, 
$$
\varphi(\bff_0)=\begin{pmatrix} 0\\ 0\end{pmatrix}, \quad \varphi(\bff_1)=\begin{pmatrix} 1\\ 0\end{pmatrix},  \quad \varphi(\bff_2)=\begin{pmatrix} 1\\ 1\end{pmatrix}
$$
The cell complex $\ol{G^3}$ is shown below, and consists of three overlapping copies of $\ol{G^2}$, obtained by translating $\ol{G^2}$ along the vectors listed above.   Note that $t_0(\ol{G^2})=\ol{G^2}$ and the centermost vertex in the picture is $\varphi(v_0v_1v_2)$.

\begin{center}
\begin{tikzpicture} [decoration={markings, 
	mark= at position 0.5 with {\arrow{stealth}},
	}
] 	
\coordinate (A') at (3, 0);
\coordinate (B') at (4, 1);
\coordinate (C') at (5, 1);
\coordinate (D') at (4, 0);
\coordinate (E') at (5, 0);
\coordinate (F') at (5, 2);
\coordinate (X') at (7,0);
\coordinate (Y') at (3,3.5);
\coordinate (M') at (6, 0);
\coordinate (N') at (6, 1);
\coordinate (P') at (6, 2);
\coordinate (Q') at (6, 3);
\draw[black, fill=black] (A') circle(0.04);
 \draw[black, fill=black] (B') circle(0.04);
 \draw[black, fill=black] (C') circle(0.04);
 \draw[black, fill=black] (D') circle(0.04);
 \draw[black, fill=black] (E') circle(0.04);
 \draw[black, fill=black] (F') circle(0.04);
 \draw[black, fill=black] (M') circle(0.04);
 \draw[black, fill=black] (N') circle(0.04);
 \draw[black, fill=black] (P') circle(0.04);
 \draw[black, fill=black] (Q') circle(0.04);
\draw[postaction={decorate}](D') -- (B');
\draw[postaction={decorate}] (B') -- (C');
\draw[postaction={decorate}](E') -- (C');
\draw[postaction={decorate}] (A') -- (D');
\draw[postaction={decorate}](D') -- (E');
\draw[postaction={decorate}] (C') -- (F');
\draw[postaction={decorate}] (M') -- (N');
\draw[postaction={decorate}] (N') -- (P');
\draw[postaction={decorate}] (P') -- (Q');
\draw[postaction={decorate}] (E') -- (M');
\draw[postaction={decorate}] (C') -- (N');
\draw[postaction={decorate}] (F') -- (P');
\draw[->, dashed] (A') -- (X');
\draw[->, dashed] (A') -- (Y');
\node[label = {[label distance=-5pt] below left :\tiny $\varphi(v_0^3)$}] at (A') {};
\node[label = {[label distance=-3pt] left :\tiny $\varphi(v_0^2v_2)$}] at (B') {};
\node[label = {[label distance=-3pt]  below :\tiny $\varphi(v_0^2v_1)$}] at (D') {};
\node[label = {[label distance=-3pt]  below :\tiny $\varphi(v_0v_1^2)$}] at (E') {};
\node[label = {[label distance=-5pt]  below right :\tiny $\varphi(v_1^3)$}] at (M') {};
\node[label = {[label distance=-3pt]  left :\tiny $\varphi(v_0v_2^2)$}] at (F') {};
\node [label = right :\tiny{$\varphi(v_2^3)$}] at (Q') {};
\node[label = right :\tiny{$\varphi(v_1^2v_2)$}] at (N') {};
\node[label = right :\tiny{$\varphi(v_1v_2^2)$}] at (P') {};
\draw[fill=lightgray] (B') -- (C') -- (E')  -- (D') -- cycle;
\draw[fill=lightgray] (C') -- (N') -- (M')  -- (E') -- cycle;
\draw[fill=lightgray] (F') -- (P') -- (N')  -- (C') -- cycle;
\pgfputat{\pgfxy(4.34,.45)}{\pgfbox[left,center]{\small $\ol{G^2}$}}
\pgfputat{\pgfxy(5.07,.45)}{\pgfbox[left,center]{\small $t_1(\ol{G^2})$}}
\pgfputat{\pgfxy(5.07, 1.45)}{\pgfbox[left,center]{\small $t_2(\ol{G^2})$}}
\end{tikzpicture}
\end{center}
\end{example}

  \begin{proposition} [{\bf The faces of the cube $\ocC(\bb,B)$}]\label{p:faces}
  Let $r,q>0$, and let $G$ be a directed tree on $q+1$ vertices
  labeled as in \cref{n:direction}, $\bb \in \cNr$, and $\varnothing
  \neq B \subseteq \supp(\bb)$. Then for every face $F$ of
  $\ocC(\bb,B)$ there exists a subset $C \subseteq B$ such
  that
$$F=\ocC(\bc, B \setminus C) \qwhere \bc = \bb - \sum_{i \in C'} \big
  (\bff_i - \bff_{\tau({i})}\big ) \qforsome \varnothing \subseteq C'
  \subseteq C.$$ In particular, for a $(|B|-1)$-dimensional face $F$
  of $\ocC(\bb,B)$ there is some $i \in B$ such that
     $$F=\ocC(\bb,B\ssm\{i\}) \qor
  F=\ocC(\bb-\bff_i+\bff_{\tau(i)},B\ssm\{i\}).$$
  \end{proposition}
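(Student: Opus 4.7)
The plan is to combine the standard face structure of a Euclidean cube with Proposition~\ref{p:cubes} (which already identifies $\ocC(\bb,B)$ as such a cube) and the linearity of $\varphi$. By Proposition~\ref{p:cubes}, $\ocC(\bb,B)$ is the solid $|B|$-dimensional cube in $\RR^q$ with vertex set $\{\varphi(\bb) - \sum_{i \in B'} \be_i : B' \subseteq B\}$ and axis-parallel edges. A Euclidean cube's faces are in bijection with pairs $(C,C')$ satisfying $C' \subseteq C \subseteq B$, where $C$ records the ``frozen'' coordinate directions and $C'$ records those frozen at the low endpoint. The face $F$ associated with $(C,C')$ has dimension $|B|-|C|$ and vertex set
$$\Big\{ \varphi(\bb) - \sum_{i \in C'} \be_i - \sum_{i \in E} \be_i : E \subseteq B \ssm C \Big\}.$$

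Having fixed such a pair, I would set $\bc = \bb - \sum_{i \in C'}(\bff_i - \bff_{\tau(i)})$ and show $F = \ocC(\bc, B \ssm C)$. Three things need to be checked: (i) $\bc \in \cNr$; (ii) $B \ssm C \subseteq \supp(\bc)$, so that Proposition~\ref{p:cubes} can be invoked on $(\bc, B \ssm C)$; and (iii) $\varphi(\bc) = \varphi(\bb) - \sum_{i \in C'} \be_i$, so that the vertex sets match. Items (i) and (ii) are coordinate-by-coordinate checks, modeled on the argument inside the proof of Proposition~\ref{p:cubes}: since $C' \subseteq B \subseteq \supp(\bb)$, each subtracted $\bff_i$ is absorbed by $b_i \geq 1$, while $(B \ssm C) \cap C' = \varnothing$ ensures these particular coordinates are not decreased by the partial contraction. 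For (iii), I would use the linear extension of $\varphi$ to $\RR^{q+1}$ given by Definition~\ref{d:phir}, together with the identity $\varphi(\bff_i) - \varphi(\bff_{\tau(i)}) = \be_i$ that was extracted during the proof of Lemma~\ref{l:differbyj} from the column recursion $\Phi(G)_i = \Phi(G)_{\tau(i)} + \be_i$ (with the convention $\Phi(G)_0 = {\bf 0}$). Summing this identity over $i \in C'$ gives (iii), and then the vertex-set description of $\ocC(\bc, B \ssm C)$ provided by Proposition~\ref{p:cubes} matches that of $F$.

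The codimension-one statement is then the case $|C|=1$: writing $C=\{i\}$, the only two choices for $C'$ are $\varnothing$, yielding $F=\ocC(\bb,B\ssm\{i\})$, and $\{i\}$, yielding $F=\ocC(\bb-\bff_i+\bff_{\tau(i)},B\ssm\{i\})$, matching the two stated forms. The one place where genuine care is needed is item (ii): the re-application of Proposition~\ref{p:cubes} requires that the support condition survive the partial contraction from $\bb$ to $\bc$, and although the verification is short, it is precisely where the hypothesis $B \subseteq \supp(\bb)$ is used in an essential way.
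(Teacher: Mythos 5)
Your argument is correct, and it reaches the conclusion by a somewhat different route than the paper. The paper proceeds by induction on the codimension of the face: the base case identifies the two families of facets of $\ocC(\bb,B)$ from the boundary description in \cref{d:cube} and translates them through \eqref{e:edges} into $\ocC(\bb,B\ssm\{i\})$ and $\ocC(\bb-\bff_i+\bff_{\tau(i)},B\ssm\{i\})$, and the inductive step uses the fact that every codimension-$t$ face is a facet of a codimension-$(t-1)$ face. You instead invoke the full face lattice of an axis-parallel cube at once, parametrized by pairs $C'\subseteq C\subseteq B$ of frozen directions and low-endpoint directions, and then match vertex sets with $\ocC(\bc,B\ssm C)$ using the linearity of $\varphi$ and the column identity $\varphi(\bff_i)-\varphi(\bff_{\tau(i)})=\be_i$ (equivalently, iterating \cref{l:differbyj}). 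This one-shot argument is arguably cleaner in that it produces the pair $(C,C')$ of the statement directly rather than assembling it recursively, and it has the virtue of making explicit the verifications the paper leaves implicit: that $\bc\in\cNr$, and that $B\ssm C\subseteq\supp(\bc)$ so that \cref{p:cubes} (or \cref{d:cubes}) may legitimately be applied to the pair $(\bc,B\ssm C)$ --- you correctly isolate the latter as the point where $B\subseteq\supp(\bb)$ is essential, since coordinates in $B\ssm C$ are never decreased by the partial contraction and coordinates in $C'$ stay nonnegative because $b_i\ge 1$. The trade-off is that you rely on the standard combinatorial description of all faces of a product of intervals, whereas the paper only ever uses the codimension-one boundary description given in \cref{d:cube} and lets induction do the rest; both are legitimate, and the computational core (identifying a face by its sink $\varphi(\bc)=\varphi(\bb)-\sum_{i\in C'}\be_i$ and its free directions $B\ssm C$) is the same in both treatments. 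One small degenerate case worth a word in a final write-up: when $C=B$ the face is a vertex and $B\ssm C=\varnothing$, which is covered by the convention $\ocC(\bc,\varnothing)=\{\varphi(\bc)\}$ of \cref{filledC} rather than by \cref{p:cubes} itself.
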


 \begin{proof} 
  Let $\varphi(\ba)$ be the source of $\cC(\bb,B)$ as in
  \cref{p:cubes}.  Assume that $F$ has dimension $|B|-t$ for $t \geq
  1$. We use induction on $t$ to prove the statement. The base case is
  $t=1$. By \cref{d:cube} for a fixed $i \in B$, the face $F$ can be
  described as the cube on vertices
  $$\big \{\varphi(\ba) + \be_i + \sum_{j\in B' } \be_{j} \mid B'
  \subseteq B\ssm \{i\} \big \} \qor \big \{ \varphi(\ba) + \sum_{j\in
    B' } \be_{j} \mid B' \subseteq B\ssm \{i\} \big \}.$$ Using
  \eqref{e:edges}, it follows immediately that
  $$F=\ocC(\bb,B\ssm\{i\}) \qor
  F=\ocC(\bb-\bff_i+\bff_{\tau(i)},B\ssm\{i\}).$$ If we set $C=\{i\}$,
  then indeed we have shown that $$F=\ocC(\bc, B \setminus C) \qwhere
  \bc = \bb - \sum_{i \in C'} \big (\bff_i - \bff_{\tau({i})}\big )
  \qand C'= \varnothing \qor C'=\{i\}.$$

  Now suppose $t>1$, and $F$ is of dimension $|B|-t$. By the definition of a cube (\cref{d:cube}), $F$ is a the boundary of a $|B|-t+1=|B|-(t-1)$-dimensional cube $G$, which is itself a face of $\ocC(\bb,B)$. 
  By the
  induction hypothesis,  there exists a subset $D \subseteq B$ such
  that $$G=\ocC(\bd, B \ssm D) \qwhere \bd = \bb - \sum_{i \in D'}
  \big (\bff_i - \bff_{\tau({i})}\big ) \qforsome
  \varnothing \subseteq D' \subseteq D.$$
  By the base case of the induction, we 
  know  for some $i \in B\ssm D$,
     $$F=\ocC(\bd,B\ssm(D\cup \{i\})) \qor
  F=\ocC(\bd-\bff_i+\bff_{\tau(i)},B\ssm (D\cup \{i\})).$$ Setting
  $C=D \cup \{i\}$, we have the desired result where $C'=D'$ or $C' = D'\cup \{i\}$.
   \end{proof}

\begin{proposition}[{\bf $\ol{G^r}$ is a polyhedral cell complex}]
\label{p:poly-cell-cx}
    Let $r,q>0$ and let $G$ be a tree on $q+1$ vertices  as in
      \cref{n:direction}. Then $\ol{G^r}$ is a polyhedral cell
    complex.
\end{proposition}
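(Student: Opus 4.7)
The plan is to verify the two conditions of \cref{d:pcx}.

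For condition (1), I would invoke \cref{p:faces} directly: it shows that every face $F$ of $\ocC(\bb, B)$ has the form $\ocC(\bc, B \setminus C)$ with $\bc = \bb - \sum_{i \in C'}(\bff_i - \bff_{\tau(i)})$ for some $\varnothing \subseteq C' \subseteq C \subseteq B$. To conclude that $(\bc, B \setminus C)$ indexes an element of $\ol{G^r}$ via \eqref{e:cell}, I need to check that $\bc \in \cNr$ and $B \setminus C \subseteq \supp(\bc)$. The first is immediate because each replacement $\bff_i - \bff_{\tau(i)}$ preserves the coordinate sum. For the second, any $j \in B \setminus C$ satisfies $j \notin C'$, so the $j$-th coordinate of $\bc$ is at least $b_j$, which is positive because $B \subseteq \supp(\bb)$.

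For condition (2), the crucial geometric observation is that each cell of $\ol{G^r}$ embeds in $\RR^q$ as an axis-aligned box with integer vertex coordinates. By \cref{p:cubes}, $\ocC(\bb, B) = \prod_{i=1}^q J_i$, where $J_i = [\varphi(\bb)_i - 1, \varphi(\bb)_i]$ when $i \in B$ and $J_i = \{\varphi(\bb)_i\}$ when $i \notin B$. For $P = \ocC(\bb, B)$ and $Q = \ocC(\bb', B')$, one has $P \cap Q = \prod_{i=1}^q (J_i \cap J_i')$. I would then carry out a coordinate-by-coordinate case analysis: since each factor is either a singleton or a unit interval with integer endpoints, each $J_i \cap J_i'$ is either empty, a singleton at an integer point, or a common unit interval. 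Hence, when $P \cap Q$ is non-empty, it is obtained from $P$ by fixing each coordinate $i \in B$ on which $J_i \cap J_i'$ collapses to a singleton to one of the two endpoints of $J_i$; by \cref{d:cube} this realizes $P \cap Q$ as a (possibly iterated) face of $P$, and by symmetry a face of $Q$. Combined with condition (1), this places $P \cap Q$ in $\ol{G^r}$.

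The only genuinely delicate step is the coordinate-by-coordinate bookkeeping for (2): confirming that every pairwise intersection of the relevant singletons and unit intervals stays within the same family, and that the resulting fixed-coordinate axis-aligned sub-box matches the description of an iterated face of a cube given in \cref{d:cube}. Once these small checks are recorded, both conditions of \cref{d:pcx} follow.
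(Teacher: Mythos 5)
Your proposal is correct, and for condition (2) it takes a genuinely different route from the paper. For condition (1) you do what the paper does (cite \cref{p:faces}), with the added, worthwhile checks that $\bc\in\cNr$ and $B\setminus C\subseteq \supp(\bc)$, which the paper leaves implicit. For condition (2), the paper argues combinatorially on vertex sets: it proves a closure claim (if $\varphi(\bb)-\sum_{j\in B_1}\be_j$ and $\varphi(\bb)-\sum_{j\in B_2}\be_j$ lie in $F=\ocC(\bb,B)\cap\ocC(\bd,D)$, then so does $\varphi(\bb)-\sum_{j\in B_1\cap B_2}\be_j$), extracts unique minimal index sets $B_0\subseteq B$, $D_0\subseteq D$, and from them writes down explicit data $\bc=\bb-\sum_{j\in B_0}(\bff_j-\bff_{\tau(j)})$ and $C=(B\ssm B_0)\cap(D\ssm D_0)$ with $F=\ocC(\bc,C)$. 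You instead exploit the geometry established in \cref{p:cubes}: each cell is an axis-aligned product $\prod_i J_i$ of integer singletons and unit intervals with integer endpoints, so $P\cap Q=\prod_i(J_i\cap J_i')$ and each factor intersection is empty, a shared integer endpoint, or the common unit interval; hence a nonempty intersection is obtained from $P$ by collapsing some coordinates of $B$ to endpoints, i.e.\ it is a face of $P$ (and of $Q$ by symmetry), and by condition (1) it lies in $\ol{G^r}$. Your argument is more elementary and, in fact, verifies the literal "face of $P$ and face of $Q$" requirement of \cref{d:pcx} more directly than the paper's proof does; what the paper's approach buys in exchange is an explicit identification of the intersection as $\ocC(\bc,C)$ with a formula for $\bc$ and $C$ in terms of the original labels, which is the kind of bookkeeping later used in the homogenization. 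The only small points to record in a polished write-up are the ones you already flag (the interval case analysis), plus the harmless conventions that vertices of cells have integer coordinates (clear since $\Phi(G)$ is an integer matrix) and that an empty intersection is a face of every polytope.
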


\begin{proof}
  \cref{d:pcx}~(1) follows directly from \cref{p:faces}.  To verify
  \cref{d:pcx}~(2),  for $\bb, \bd \in \cNr$
    consider $$F=\ocC(\bb,B)\cap \ocC(\bd,D) \qwhere \varnothing \neq
    B \subseteq \supp(\bb) \qand \varnothing \neq D \subseteq
    \supp(\bd).$$ We need to show $$F=\ocC(\bc,C)  \qforsome \bc \in
    \cNr \qand C \subseteq \supp(\bc).$$  By \cref{p:cubes}, the cubes 
    $\ocC(\bb,B)$ and $\ocC(\bd,D)$ have vertex sets $$\{\varphi(\bb)-
    \sum_{j\in A} \be_{j} \mid A \subseteq B \} \qand \{\varphi(\bd)-
    \sum_{j\in A} \be_{j} \mid A \subseteq D\},$$ respectively.
  
  {\bf Claim:} if $B_1, B_2 \subseteq B$ are such that
  \begin{equation}\label{e:1}
    u=\varphi(\bb)- \sum_{j\in B_1} \be_{j} \in F \qand v=\varphi(\bb)-
    \sum_{j\in B_2} \be_{j} \in F,
  \end{equation} then
  \begin{equation}\label{e:intersect}
    \varphi(\bb)- \sum_{j\in B_1 \cap B_2} \be_{j} \in F.
  \end{equation}

  To see this,  first note that $$\varphi(\bb)- \sum_{j\in B_1 \cap B_2} \be_{j} \in  \ocC(\bb, B)$$ by definition. Now since $u,v \in F$, then  $u, v \in
  \ocC(\bd,D)$. Hence, for some $D_1, D_2 \subseteq D$
  \begin{equation}\label{e:2}
  u=\varphi(\bd)- \sum_{j\in D_1} \be_{j}
  \qand v=\varphi(\bd)- \sum_{j\in D_2} \be_{j}.
  \end{equation}
  Now \eqref{e:1} tells us that $$u- \sum_{j\in B_2\ssm B_1} \be_{j}=
  v - \sum_{j\in B_1\ssm B_2} \be_{j}$$ which combined with
  \eqref{e:2} implies that
  $$\varphi(\bd)- \sum_{j\in D_1} \be_{j} - \sum_{j\in B_2\ssm B_1}
  \be_{j}= \varphi(\bd)- \sum_{j\in D_2} \be_{j} - \sum_{j\in
    B_1\ssm B_2} \be_{j}.$$ This implies that $$D_1 \cup (B_2\ssm
  B_1) = D_2 \cup (B_1\ssm B_2) \Longrightarrow (B_1\ssm B_2) \subseteq D_1.$$
  Now combining \eqref{e:1} and \eqref{e:2} we get 
  $$\varphi(\bb)- \sum_{j\in B_1\ssm B_2} \be_{j} - \sum_{j\in
    B_1\cap B_2} \be_{j} = u = \varphi(\bd)- \sum_{j\in D_1} \be_{j}.$$
  Moreover, since $(B_1\ssm B_2) \subseteq D_1$, it follows that
  $$ \varphi( \bb)- \sum_{j\in B_1\cap B_2} \be_{j}= \varphi(\bd) -
  \sum_{j\in D_1\ssm (B_1\ssm B_2)} \be_{j} \in \ocC(\bb, D) \cap
    \ocC(\bd, D).$$ This establishes \eqref{e:intersect} as
  claimed.  It follows immediately that there exist unique minimal
  subsets $B_0$ of $B$ and $D_0$ of $D$ such that
  $$ \varphi({\bb}) - \sum_{j \in B_0} \be_j =\varphi({\bd}) - \sum_{j \in
    D_0} \be_j \in F.$$ 
Set
  $$\bc=\bb - \sum_{j \in B_0} (\bff_j-\bff_{\tau(j)})\in \cNr
  \qand C= (B\ssm B_0)\cap (D\ssm D_0) \subseteq \supp(\bc)$$
  so that $$\varphi (\bc) =\varphi(\bb) - \sum_{j \in B_0} \be_j \in F,$$
  and moreover
   $$\varphi(\bc) -\sum _{j\in C'} \be_j \in F
   \qforall C' \subseteq  C.$$  In other words,
   \begin{equation}\label{e:one-direction}\ocC(\bc, C) \subseteq F.
   \end{equation}

   To see the reverse inclusion to \eqref{e:one-direction}, take  $w
 \in F$ so that
 $$w=\varphi(\bb) - \sum_{j \in B'} \be_j =\varphi(\bd) - \sum_{j \in
   D'} \be_j $$ for some $$B_0 \subseteq B' \subseteq B \qand D_0
 \subseteq D' \subseteq D.$$ So we can write
 \begin{equation}\label{e:w1} w=\varphi(\bb) -
 \sum_{j \in B_0} \be_j - \sum_{j \in B'\ssm B_0} \be_j
 =\varphi(\bc)-\sum_{j \in B'\ssm B_0} \be_j. 
 \end{equation}
 Similarly
 \begin{equation}\label{e:w2} w=\varphi(\bc)-\sum_{j \in D'\ssm D_0} \be_j.
 \end{equation}
 \cref{e:w1,e:w2} indicate that $$\sum_{j \in B'\ssm B_0} \be_j=\sum_{j \in D'\ssm D_0} \be_j \Longrightarrow B'\ssm B_0=D'\ssm D_0 \subseteq C.$$ Hence $w\in \ocC(\bc,C)$. We conclude that $F=\ocC(\bc, C)$ as desired.
  \end{proof}

\begin{bfchunk}{Labeling $\ol{G^r}$ with monomials.}\label{ss:homogenize}
Recall that if $I$ is a monomial ideal generated by $m_0,\ldots,m_q$,
the ideal $I^r$ is generated by monomials of the
form $$\bma=m_0^{a_0}\cdots m_q^{a_q}$$ where $\bm=\{m_0,\ldots,m_q\}$
and $\ba=(a_0,a_1,\ldots,a_q) \in \cNr$.  When $I$ is of projective
dimension one, \cite[Proposition 4.1]{CEFMMSS} shows that $$\bma=\bmb
\iff \ba=\bb.$$ In particular, the set $\{\bma \mid \ba \in \cNr\}$ is
a minimal generating set for the ideal $I^r$.

Suppose $G$ is the directed tree with root $v_0$ and vertex set
$v_0,v_1,\ldots,v_q$ that supports a minimal free resolution of the
square-free monomial ideal $I=(m_0,m_1,\ldots,m_q)$ of projective
dimension one, where every vertex $v_i$ is labeled with the monomial
$m_i$. Our goal is to show that the polyhedral cell complex
$\ol{G^r}$, where every vertex $\bva$ is labeled with the monomial
$\bma$, supports a minimal free resolution of $I^r$.

With this monomial labeling, we homogenize the cellular (cubical)
chain complex of $\ol{G^r}$.  The cell complex $\ol{G^r}$ gives rise
to the oriented chain complex $C(\ol{G^r},\sfk)$, as
described in \cref{oriented-chain-complex}. In order to define the
signs of the maps, let $$B=\{j_1, \ldots, j_i\} \qwhere j_1 < j_2 <
\cdots < j_i.$$ Using \cref{p:faces} the differential $\partial_i$ is
described by:
\begin{equation}
\label{chain}
\partial_i(\bu_{\cC(\bb,B)})=\sum_{1 \leq k \leq i}(-1)^{k+1}
\bu_{\cC(\bb,B\ssm \{j_k\})}+\sum_{1 \leq k \leq i}(-1)^{k}\bu_{\cC(\bb-\bff_{j_k}+\bff_{\tau(j_k)},B\ssm\{j_k\})}.
\end{equation}

To homogenize these maps, denote the lcm of the monomial labels of
the vertices of a cube $\ocC(\bb,B)$ by $\bm_{\cC(\bb,B)}$.
The chain complex in \eqref{chain} then homogenizes as described in
Section~\cref{s:homogenization} to a $\mathbb Z^n$-graded cellular complex
\begin{equation}\label{e:cell-complex} \mathbb F_{\ol{G^r}}: \qquad \cdots \rightarrow
F_i\xrightarrow{\partial_i}F_{i-1}\to\dots\to
F_1\xrightarrow{\partial_1}F_0
\end{equation} where $F_i$ is the free graded $R$-module with basis elements
$\bu_{\cC(\bb,B)}$ with $\ocC(\bb,B)\in {\ol{G^r}}$ and where 
$\bu_{\cC(\bb,B)}$ is considered to be in degree equal to the exponent
vector of $\bm_{\cC(\bb,B)}$. For each $i>0$ the differential
$\partial_i$ of $\mathbb F_{\ol{G^r}}$ is described by:

\begin{align}
\begin{split}
\label{res}
\partial_i(\bu_{\cC(\bb,B)})=
&\sum_{1 \leq k \leq i}(-1)^{k+1} \frac{\bm_{\cC(\bb,B)}}{\bm_{\cC(\bb, B\ssm \{j_k\})}}
\ \bu_{\cC(\bb,B\ssm \{j_k\})}+\\
+&\sum_{1 \leq k \leq i}(-1)^{k}\frac{\bm_{\cC(\bb,B)}}{\bm_{\cC(\bb-\bff_{j_k}+\bff_{\tau(j_k)},B\ssm\{j_k\})}}
\ \bu_{\cC(\bb-\bff_{j_k}+\bff_{\tau(j_k)},B\ssm\{j_k\})}\,.
\end{split}
\end{align}
\end{bfchunk}
 
	We now focus on the two monomial coefficients appearing in \eqref{res}.
	
	\begin{lemma}
		\label{p:simplify}
		Let $\bb\in  \cNr$, $B\subseteq \supp(\bb)$ and $i\in B$.
		The following equalities then hold: 
		\begin{enumerate}[\quad\rm(1)]
			\item ${\displaystyle \frac{\bm_{\cC(\bb, B)}}{\bm_{\cC(\bb, B\ssm\{i\})}}
				=\frac{\lcm(m_i, m_{\tau(i)})}{m_i}}$;

			\item ${\displaystyle
				\frac{\bm_{\cC(\bb,B)}}{\bm_{\cC(\bb-\bff_i+\bff_{\tau(i)}, B\ssm\{i\})}}
				=\frac{\lcm(m_i, m_{\tau(i)})}{m_{\tau(i)}}}$.
		\end{enumerate}
	\end{lemma}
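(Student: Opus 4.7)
The plan is to reduce both (1) and (2) to a single closed-form expression for $\bm_{\cC(\bb, B)}$. Specifically, I would first establish the formula
\[
\bm_{\cC(\bb, B)} \;=\; \bmb \cdot \prod_{j \in B} \frac{\lcm(m_j, m_{\tau(j)})}{m_j},
\]
which is a genuine monomial because $B\subseteq \supp(\bb)$ forces each $m_j$ with $j\in B$ to divide $\bmb$.

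To prove this formula, I would use \cref{p:cubes} to parametrize the $2^{|B|}$ vertices of $\ocC(\bb, B)$: the vertex $\varphi(\bb)-\sum_{i\in B'}\be_i$ corresponds to the exponent vector $\bb -\sum_{i\in B'}(\bff_i-\bff_{\tau(i)})\in \cNr$, and therefore carries the label $\bmb\cdot\prod_{i\in B'}m_{\tau(i)}/m_i$. Pulling the common monomial $\bmb/\prod_{j\in B}m_j$ out of the lcm, $\bm_{\cC(\bb,B)}$ equals
\[
\frac{\bmb}{\prod_{j\in B}m_j}\cdot\lcm\left\{\prod_{i\in B'}m_{\tau(i)}\cdot\prod_{i\in B\ssm B'}m_i \,:\, B'\subseteq B\right\}.
\]
A variable-by-variable check then shows that the inner lcm factors as $\prod_{j\in B}\lcm(m_j,m_{\tau(j)})$: for each variable $x$, the largest exponent achievable is attained by choosing, independently for each $j\in B$, whichever of $m_j$ or $m_{\tau(j)}$ contributes more $x$'s. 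Substituting back yields the displayed formula.

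Once the formula is in hand, part (1) is immediate: the expressions for $\bm_{\cC(\bb,B)}$ and $\bm_{\cC(\bb, B\ssm\{i\})}$ share the $\bmb$ factor, and the products over $B$ and $B\ssm\{i\}$ differ by the single factor $\lcm(m_i,m_{\tau(i)})/m_i$. For part (2), I would apply the same formula to the face $\ocC(\bb-\bff_i+\bff_{\tau(i)}, B\ssm\{i\})$, after noting that $B\ssm\{i\}$ is still contained in $\supp(\bb-\bff_i+\bff_{\tau(i)})$ since the coordinates indexed by $B\ssm\{i\}$ are not decreased by this modification. Because $\bm^{\bb-\bff_i+\bff_{\tau(i)}}=\bmb\cdot m_{\tau(i)}/m_i$, the ratio collapses as
\[
\frac{\lcm(m_i,m_{\tau(i)})/m_i}{m_{\tau(i)}/m_i}=\frac{\lcm(m_i,m_{\tau(i)})}{m_{\tau(i)}}.
\]

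The only real obstacle is bookkeeping: at each step one must verify that the various rational expressions (e.g.\ $\bmb/\prod_{j\in B}m_j$ or $\bmb\cdot m_{\tau(i)}/m_i$) are genuine monomials rather than elements of the fraction field. All such cancellations are guaranteed by the hypothesis $B\subseteq \supp(\bb)$, so the argument is ultimately a routine computation once the main formula is established.
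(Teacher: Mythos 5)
Your proposal is correct and follows essentially the same route as the paper: both compute the labels of the $2^{|B|}$ vertices of $\ocC(\bb,B)$ via \cref{p:cubes}, take their lcm to obtain the closed-form expression $\bm_{\cC(\bb,B)}=\bmb\prod_{j\in B}\lcm(m_j,m_{\tau(j)})/m_j$ (the paper writes the factor as $\prod_{x\mid m_{\tau(j)},\,x\nmid m_j}x$, which is the same thing for square-free generators), and then read off the two quotients. The only cosmetic difference is in (2), where the paper divides the labels of the two codimension-one faces to get $m_{\tau(i)}/m_i$ and reuses (1), while you apply the closed-form formula directly to $\ocC(\bb-\bff_i+\bff_{\tau(i)},B\ssm\{i\})$; both computations are equivalent.
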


\begin{proof}  By \cref{p:cubes} and \eqref{e:edges} the vertices of
		$\cC(\bb,B)$ are the images under $\varphi$ of 
		
		$$ \bb- \sum_{j\in B'} \bff_{j} +\sum_{j\in B'}
  \bff_{\tau(j)} \quad {\text{ for all }} B' {\text{ such that }}
  \varnothing \subseteq B' \subseteq B.$$ Notice that for each $j$,
  the monomial label associated to  $$\bb - \bff_j +
    \bff_{\tau(j)} \quad \text{ is } \quad \frac{\bmb
      m_{\tau(j)}}{m_j},$$ so for each $B' \subseteq B$, the label
  associated to
			$$ \bb- \sum_{j\in B'} \bff_{j} +\sum_{j\in B'}
		\bff_{\tau(j)} \quad {\text{ is }} \quad \bmb \prod_{j \in B'}\frac{m_{\tau(j)}}{m_{j}}.$$
		As a result, considering all $2^{|B|}$ vertices of $\cC(\bb,B)$, we have
		\begin{align} \nonumber
			\bm_{\cC(\bb,B)}&=\lcm \left \{
			\bmb \prod_{j \in B'}\frac{m_{\tau(j)}}{m_{j}} \ \big | \  \varnothing \subseteq B' \subseteq B
			\right \}.\\ \nonumber
	 &=\lcm \left (
			\bmb,
			\left \{\frac{\bmb \cdot m_{\tau(j_1)}\cdots m_{\tau(j_t)}}{m_{j_1} \cdots m_{j_t}}\right \}_{ \{j_1,\ldots,j_t \}\subseteq B}
			\right )\\ &= \bmb \prod_{j\in B} \left( \prod_{x
				\mid m_{\tau(j)}, \, x \nmid m_j} x \right ) \qwhere x \in \{x_1, \dots, x_n\}.
			\label{e:monomial-label}
		\end{align}
		
		Similarly,
		$$ \bm_{\cC(\bb, B\ssm\{i\})} = \bmb \prod_{j\in B\setminus\{i\}}
		\left( \prod_{x\mid m_{\tau(j)}, \, x\nmid m_j} x \right).$$
		
		Thus the quotient is
		$$\frac{\bm_{\cC(\bb, B)}}{\bm_{\cC(\bb, B\ssm\{i\})}}=
		\prod_{x \mid m_{\tau(i)}, x \nmid m_i} x =  \frac{m_{\tau(i)}} {\gcd(m_i, m_{\tau(i)})}=
		\frac{\lcm(m_i, m_{\tau(i)})}{m_i}$$
		and $(1)$ follows. To see equality $(2)$, note that by
		\eqref{e:monomial-label} 
		$$\frac{\bm_{\cC(\bb-\bff_i+\bff_{\tau(i)}, B\ssm\{i\})}}{\bm_{\cC(\bb, B\ssm\{i\})}} =
		\frac{m_{\tau(i)}}{m_i} $$
		and thus using equality $(1)$,
		$$\frac{\bm_{\cC(\bb, B)}}{\bm_{\cC(\bb-\bff_i+\bff_{\tau(i)},
				B\ssm\{i\})}}= 
		\frac{m_{i}}{m_{\tau(i)}}
		\frac{\bm_{\cC(\bb, B)}}{\bm_{\cC(\bb, B\ssm\{i\})}} =
		\frac{\lcm(m_i, m_{\tau(i)})}{m_{\tau(i)}}.$$
	\end{proof}

As a result of \cref{p:simplify}, the homogenized differentials described in 
\eqref{res} can be written as

\begin{align}
\begin{split}
\label{e:res-again}
\partial_i(\bu_{\cC(\bb,B)})=
&\sum_{1 \leq k\leq i}(-1)^{k+1} \frac{\lcm(m_{j_k}, m_{\tau(j_k)})}{m_{j_k}}
\ \bu_{\cC(\bb,B\ssm \{j_k\})}\  \\
+& \sum_{1 \leq k\leq i}(-1)^{k} \frac{\lcm(m_{j_k}, m_{\tau(j_k)})}{m_{\tau(j_k)}}
\ \bu_{\cC(\bb-\bff_{j_k}+\bff_{\tau(j_k)},B\ssm\{j_k\})}\,.
\end{split}
\end{align}

\begin{example}\label{ex:path-3} 
  Let $G$ be the path graph in \cref{e:running1} and \cref{ex:path-0}.  In the cell complex
  $\ol{G^2}$ drawn in \cref{ex:path-2}, let
  $c=\ocC((0,1,1), \{1,2\})$ be the
  shaded square.  The cells of dimension $1$ are 
  the 6 line segments representing the edges, with the horizontal segments on the first line below and the vertical ones on the second.
\begin{align*}
c_1&=\ocC((1,1,0), \{1\}) & c_2&=\ocC((0,2,0), \{1\}) &
c_3&=\ocC((0,1,1), \{1\})\\
c_4&=\ocC((0,1,1), \{2\})& c_5&=\ocC((1,0,1),\{2\}) & c_6&=\ocC((0,0,2), \{2\}).
\end{align*} 
  The $0$-dimensional cells are shown below, where the first entry is $\varphi(2,0,0) = (0,0)$.
\begin{align*}
c_1'&=\ocC((2,0,0), \varnothing) & c_2'&=\ocC((1,1,0), \varnothing)&
c_3'&=\ocC((0,2,0),\varnothing)\\
c_4'&=\ocC((0,1,1), \varnothing)& c_5'&=\ocC((1,0,1),\varnothing) & c_6'&=\ocC((0,0,2), \varnothing).
\end{align*}  
 With this notation, the differential of the complex $C (\ol{G^2}, \sfk)$  is given as follows: 
\begin{align*}
\partial(\bu_c) \hskip.06in &=\bu_{c_2}+\bu_{c_4}-\bu_{c_3}-\bu_{c_5}&
\partial(\bu_{c_1})&=\bu_{c'_2}-\bu_{c'_1} \\
\partial(\bu_{c_2})&=\bu_{c'_3}-\bu_{c'_2}&
\partial(\bu_{c_3})&=\bu_{c'_4}-\bu_{c'_5}\\
\partial(\bu_{c_4})&=\bu_{c'_4}-\bu_{c'_3}&
\partial(\bu_{c_5})&=\bu_{c'_5}-\bu_{c'_2}\\ 
\partial(\bu_{c_6})&=\bu_{c'_6}-\bu_{c'_4} && &&
\end{align*}

The complex $C(\ol{G^2}, \sfk)$ is thus as follows: 
$$
0\to \sfk\xrightarrow{\begin{bmatrix}0\\1\\-1\\1\\-1\\0\end{bmatrix}}\sfk^6\xrightarrow{\begin{bmatrix}-1&0&0&0&0&0\\1&-1&0&0&-1&0\\0&1&0&-1&0&0\\0&0&1&1&0&-1\\0&0&-1&0&1&0\\0&0&0&0&0&1\end{bmatrix}}\sfk^6.
$$

  Now we homogenize this complex using the monomial generators of $I$: $m_0=xy$,
  $m_1= yz$, and $m_2= zu$, which yields:
$$
\mathbb F_{\ol{G^2}}: \quad 0\to R(-4)\xrightarrow{\begin{bmatrix}0\\u\\-y\\x\\-z\\0\end{bmatrix}}R(-3)^6\xrightarrow{\begin{bmatrix}-z&0&0&0&0&0\\x&-z&0&0&-u&0\\0&x&0&-u&0&0\\0&0&x&y&0&-u\\0&0&-z&0&y&0\\0&0&0&0&0&y\end{bmatrix}} R(-2)^6.
$$ and  we get a multigraded resolution as follows: 
$$
 \quad 0 \to  R(xy^2z^2u)\xrightarrow{\begin{bmatrix}0\\u\\-y\\x\\-z\\0\end{bmatrix}}\begin{array}{ccc} R(x^2y^2z)\\  \oplus R(xy^2z^2)\\ \oplus  R(xyz^2u) \\ \oplus R(y^2z^2u)\\ \oplus R(xy^2zu) \\ \oplus R(yz^2u^2) \end{array} \xrightarrow{\begin{bmatrix}-z&0&0&0&0&0\\x&-z&0&0&-u&0\\0&x&0&-u&0&0\\0&0&x&y&0&-u\\0&0&-z&0&y&0\\0&0&0&0&0&y\end{bmatrix}}  \begin{array}{ccc} R(x^2y^2) \\ \oplus R(xy^2z)\\ \oplus  R(y^2z^2) \\ \oplus R(yz^2u)\\ \oplus R(xyzu) \\ \oplus R(z^2u^2) \end{array}.
$$
\end{example}

\begin{remark}
Let $r\ge 1$. In Section~\cref{ss:covering}, we introduced the translations $t_i\colon \ol{G^r}\to \ol{G^{r+1}}$ and we noted they are cellular maps. At the level of the associated homogenized chain complexes, these cellular maps induce chain maps $\widetilde{t_i}\colon \mathbb F_{\ol{G^r}}\to \mathbb F_{\ol{G^{r+1}}}$ described by 
$$
\widetilde{t_i}(\bu_{\cC(\bb,B)})=\bu_{\cC(\bb+\bff_i,B)},
$$
for all $\ocC(\bb, B)\in \ol{G^r}$ and all $i$ with $0\le i\le q$.  One can check that these maps are indeed chain maps, using the description of the differentials in \cref{e:res-again}.   As will be shown in \cref{s:minres}, $F_{\ol{G^r}}$ is a minimal free resolution of $I^r$ and $F_{\ol{G^{r+1}}}$ is a minimal free resolution of $I^{r+1}$. 

Note that the  chain maps $\widetilde t_i$ can also be described as the chain maps induced by the map $I^r\to I^{r+1}$ given by multiplication by $m_i$.  Additionally, if we consider the map $\psi\colon \bigoplus_{i=0}^q I^r\to I^{r+1}$ whose $i^{th}$ component is given by multiplication by $m_i$ for each $i$, then the induced chain maps
 $$\bigoplus_{i=0}^q \widetilde t_i\colon \bigoplus_{i=0}^q \mathbb F_{\ol{G^r}}\to 
\mathbb F_{\ol{G^{r+1}}}
$$
 are surjective when $r\ge q$, in view of  \cref{G^r-cover}.  Hence, the induced  maps 
 $$\Tor_j^R(\psi, \sfk)\colon \Tor_j^R(\bigoplus_{i=0}^q I^r, \sfk)\to \Tor_j^R( I^{r+1},\sfk)$$ are surjective for all $j\ge 0$ and all $r\ge q$. 
\end{remark}

\section{Minimal cellular resolutions of $I^r$}
\label{s:minres}

In this section, we construct a cellular
resolution of $I^r$,  where $I$ is a monomial ideal of projective
  dimension one with a minimial resolution supported on a graph
  $G$, showing that \cref{q:driving} has a positive answer for this class of ideals. At this point we have constructed $\ol{G^r}$ as  a
  polyderal cell complex labelled with the monomials that generate
  $I^r$. What remains in this section is to prove that the labelled
  chain complex of $\ol{G^r}$ gives a minimal free resolution of
  $I^r$. To complete this task we will compare the labelled chain
  complex of $\ol{G^r}$ to another chain complex, which is in turn
isomorphic to the minimal free resolution of $I^r$.

The {\it Rees algebra} of $I$ is a well-studied object that encodes
all powers of $I$ into a single  construction. Having a family of
minimal resolutions of $I^r$ for all $r>0$ that are constructed from a
common base, namely $G$, naturally leads one to wonder if these
resolutions are related to a resolution of the Rees algebra. Indeed,
the  other chain complex mentioned in the previous paragraph
turns out to stem from the Rees algebra.   We will show
that for square-free monomial ideals of projective dimension one, the
Rees algebra can be presented as a quotient of a polynomial ring by a
complete intersection ideal. A strand of the Koszul resolution of the
complete intersection is the chain complex that will be
isomorphic  to the chain complex of $\ol{G^r}$.

 \begin{bfchunk}{Rees algebras and  ideals of linear type.}
  We first recall background information related to the Rees algebra and show that square-free monomial ideals of projective dimension one are of  linear type. 
  
  Let $(R, \fm)$ be a Noetherian local ring and $I$ an ideal of
  $R$. The { \bf Rees algebra} of $I$, denoted $R[It]$, is a subalgebra
  of the polynomial ring $R[t]$ consisting of polynomials for which
  the coefficient of $t^s$ is in $I^s$ for all $s$. That is,
 $$R[It]=R \oplus It \oplus I^2t^2 \oplus I^3t^3 \oplus \cdots.$$ When
  $R=\sfk[x_1,\ldots,x_n]$ is a polynomial ring, the definition
  carries through with $\fm = (x_1, \ldots, x_n)$ the homogeneous
  maximal ideal of $R$. A common way to gain insight into $R[It]$ is
  to embed it in a larger polynomial ring and then study its defining
  ideal. Suppose $I$ is our monomial ideal of projective dimension one
  minimally generated by square-free monomials $m_0, \ldots, m_q$, and let
  $S=R[T_0, \ldots, T_q]$. The map $\psi : S \rightarrow R[It]$
  defined by $\psi(T_i) = m_it$ gives a surjective $R$-algebra
  homomorphism. Hence, if $J={\mbox{\rm ker}}\psi$, then
  $$R[It]= S/J = R[T_0, \ldots, T_q]/J.$$

  The map  $\psi$ is graded so its kernel $J=J_1 +J_2 + \cdots$
  is  a homogeneous ideal, commonly referred to as the  {\bf
    defining ideal} of $R[It]$.  The graded component of $J$ of
    degree $1$, namely $J_1$, is generated by elements of the
  form: $$\{a_0T_0+a_1T_1+\cdots + a_qT_q \mid a_0m_0 + \ldots +a_qm_q
  =0 \} $$ which correspond to the generators of the first syzygy module 
   of $I$. In this specific case, where
    $I=(m_0,\ldots,m_q)$ has projective dimension one, we know that
    (\cite[proof of Theorem~8]{FH}) the first syzygy module can be
    generated by the homogenized generators of the chain complex of
    the graph supporting the resolution of $I$, namely
    \begin{equation}
\label{e:regseq}
g_k=\frac{\lcm(m_k, m_{\tau(k)})}{m_k}T_k-\frac{\lcm(m_k,
  m_{\tau(k)})}{m_{\tau(k)}}T_{\tau(k)} \qfor k\in [q].
\end{equation}
 Therefore, $J_1=(g_1,\dots,g_q)$.
 \end{bfchunk}
    
  We now show that $I$ is of {\bf linear type}, that is, $J$ is
  generated by its degree one elements.

  \begin{lemma}\label{l:num-gens}
 Let $I$ be an ideal of projective dimension one
 minimally generated by $q+1$ square-free monomials in the polynomial
 ring $R=\sfk[x_1,\ldots,x_n]$ over a field $\sfk$.  Then $q+1 \leq
 n$.
\end{lemma}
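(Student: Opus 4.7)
The plan is to invoke a transcendence-degree argument, using as key input the statement already recalled in Section~\ref{ss:homogenize}: by \cite[Proposition~4.1]{CEFMMSS}, for a monomial ideal $I$ of projective dimension one one has $\bma = \bmb$ if and only if $\ba = \bb$, so that the map $\ba \mapsto \bma$ on $\mathbb{Z}_{\geq 0}^{q+1}$ is injective.

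I would use this to deduce that $m_0, \ldots, m_q$ are algebraically independent over $\sfk$. Indeed, a putative nonzero polynomial relation $P(T_0, \ldots, T_q) = \sum_\ba c_\ba T_0^{a_0}\cdots T_q^{a_q}$ with $P(m_0, \ldots, m_q) = 0$ would give $\sum_\ba c_\ba \bma = 0$ in $R$; but the injectivity above makes the monomials $\bma$ pairwise distinct in $R$, hence $\sfk$-linearly independent, so every $c_\ba$ must vanish. Therefore $\sfk[m_0, \ldots, m_q]$ is a polynomial subring of $R$ of transcendence degree $q+1$, and since $R = \sfk[x_1, \ldots, x_n]$ has transcendence degree $n$ over $\sfk$ we conclude that $q+1 \leq n$.

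The single non-routine step is the appeal to \cite[Proposition~4.1]{CEFMMSS}; once that is in hand, the transcendence-degree argument goes through immediately, and there is no real obstacle beyond confirming that the cited injectivity is valid on all of $\mathbb{Z}_{\geq 0}^{q+1}$ and not merely on a single $\cNr$.
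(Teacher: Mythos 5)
Your reduction of the lemma to algebraic independence of $m_0,\ldots,m_q$ is logically sound, but the input you cite does not deliver what you need, and the step you defer as a mere ``confirmation'' is in fact the entire mathematical content. As recalled in Section~\ref{ss:homogenize}, \cite[Proposition 4.1]{CEFMMSS} is invoked for $\ba,\bb\in\cNr$, i.e.\ for exponent vectors of the \emph{same} total degree $r$: this is exactly what is needed to say that the monomials $\bma$ with $\ba\in\cNr$ minimally generate $I^r$, and it is also how the proposition is used later in the fiber cone corollary, where $|\ba|=|\bb|$ is first extracted from the $t$-grading before the proposition is applied. Equal-degree injectivity is strictly weaker than injectivity on all of $\mathbb{Z}_{\geq 0}^{q+1}$: it says only that the exponent vectors of $m_0,\dots,m_q$ admit no rational dependence with coefficient sum zero (affine independence), whereas your transcendence-degree argument needs that they admit no dependence at all (linear independence). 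The gap is not a formality: for example $m_0=xa$, $m_1=yb$, $m_2=zc$, $m_3=xyz$, $m_4=abc$ are square-free, none divides another, and $m_0m_1m_2=m_3m_4$ is a cross-degree collision, so square-freeness and minimal generation alone do not rule out such relations; any proof of the global injectivity you need must use projective dimension one in an essential way (that particular ideal is not of projective dimension one), and you supply no such argument. Note also that the equal-degree statement does give algebraic independence of $m_0t,\dots,m_qt$ in $R[t]$, but that only yields $q+1\le n+1$; the missing $1$ is precisely what the deferred claim must provide, and that claim is at least as strong as the lemma itself, since linear independence of $q+1$ integer vectors in $\mathbb{Z}^n$ already forces $q+1\le n$.

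For contrast, the paper's proof sidesteps all of this: it uses the structural fact \cite[(4.0.1)]{CEFMMSS} coming from the tree construction, namely that for each $i\in[q]$ there is a variable $x_{a_i}$ with $x_{a_i}\nmid m_i$ and $x_{a_i}\mid m_j$ for all $j<i$. These variables are pairwise distinct, giving $q\le n$; and if no further variable divided any generator, then square-freeness would give $m_j\mid x_{a_1}\cdots x_{a_q}\mid m_0$ for some $j\ge 1$, contradicting minimal generation, whence $q+1\le n$. To salvage your route you would have to actually prove, for square-free ideals of projective dimension one, that $\ba\mapsto\bma$ is injective on all of $\mathbb{Z}_{\geq 0}^{q+1}$ (equivalently, that the generators are algebraically independent), rather than cite it; otherwise an argument along the paper's lines is needed.
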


  \begin{proof}
    If $q=0$ the result follows trivially, so suppose $q\geq 1$.  From
    the concrete construction of the tree $G$ that supports a minimal
    free resolution of $I$ it follows~\cite[(4.0.1)]{CEFMMSS} that for
    every $i \in [q]$ there exists a variable $x_{a_i} \in \{x_1,
    \ldots, x_n\}$ such that
    \begin{equation}\label{e:vars}
      x_{a_i} \nmid
    m_i \qand x_{a_i}\mid m_j \qfor 0\leq j < i.
    \end{equation}

    This implies that $q \leq n$. On the other hand,
    by \eqref{e:vars} $x_{a_1}\cdots x_{a_q} \mid m_0$. If for any $j$ $x \mid m_j$ implies $x = x_{a_i}$ for some $i$, then since $I$ is square-free, $m_j \mid m_0$, a contradiction. Thus we must have at least $q+1$ variables; i.e. $q+1
    \leq n$.
\end{proof}

\begin{theorem}\label{t:lineartype}
Let $I$ be a square-free monomial ideal of projective dimension one in
the polynomial ring $R=\sfk[x_1,\ldots,x_n]$ over a field $\sfk$.
Then $I$ is of linear type. 
\end{theorem}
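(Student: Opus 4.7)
The proof strategy is to establish $J = J_1 = (g_1, \ldots, g_q)$ by exploiting the triangular structure of the syzygies $g_k$, each of which involves only $T_k$ and $T_{\tau(k)}$ with $\tau(k) < k$. The containment $(g_1, \ldots, g_q) \subseteq J$ is automatic from the construction of $J_1$, so the task reduces to proving $J \subseteq (g_1, \ldots, g_q)$.

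First I would compute heights: since $I$ is a proper nonzero ideal in the polynomial ring $R$, the Rees algebra $R[It]$ is a domain of Krull dimension $n+1$, and with $\dim S = n+q+1$ this gives $\htt(J) = q$. Next, I would show that $g_1, \ldots, g_q$ form a regular sequence in $S$. The coefficient matrix of the $g_k$'s with respect to the basis $T_0, \ldots, T_q$ is a $q \times (q+1)$ matrix over $R$, and upon deleting the column indexed by $T_0$ it becomes lower triangular with nonzero diagonal entries $a_k = \lcm(m_k, m_{\tau(k)})/m_k$. This forces $\htt((g_1, \ldots, g_q)) = q$ in the Cohen-Macaulay ring $S$, so the sequence is regular and $A := S/(g_1, \ldots, g_q)$ is a complete intersection of Krull dimension $n+1$.

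With both $(g_1, \ldots, g_q)$ and $J$ of height $q$ and containment in one direction, it remains to show that the surjection $A \twoheadrightarrow R[It]$ is an isomorphism, equivalently that $A$ is a domain. The plan is to localize at $a = a_1 \cdots a_q \in R$: in $A_a$ the relations $a_k T_k = b_k T_{\tau(k)}$ can be iteratively solved for $k = 1, 2, \ldots, q$, yielding each $T_k$ as an explicit element of $R_a \cdot T_0$, so that $A_a \cong R_a[T_0]$ is a polynomial ring and hence a domain. To transfer primality back to $A$ it suffices that $a$ be a non-zero-divisor on $A$; since $A$ is Cohen-Macaulay, all of its associated primes are minimal, and one must verify that none of them contains $a$.

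The main obstacle is precisely this non-zero-divisor property of $a$, equivalently the primality of $(g_1, \ldots, g_q)$. To handle it, I would exploit the distinguished variables $y_i = x_{a_i}$ furnished by \cref{l:num-gens}: each satisfies $y_i \nmid m_i$ and $y_i \mid m_j$ for $j < i$, hence $y_i \mid a_i$ but $y_i \nmid b_i$, giving enough combinatorial leverage to control the primary decomposition of $(g_1, \ldots, g_q)$. An alternative path, avoiding explicit analysis of associated primes, is to prove that $\sym_R(I) = S/J_1$ is torsion-free as an $R$-module in each $T$-graded piece; then the natural surjection $\sym_R(I) \twoheadrightarrow R[It]$ of torsion-free $R$-modules of the same generic rank must be an isomorphism. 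Either approach ultimately rests on the combinatorial structure of the tree $G$.
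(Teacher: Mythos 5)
Your reduction of linear type to the statement that $A=S/(g_1,\dots,g_q)$ is a domain is sound in outline, but the two steps that carry all the weight are not proved, and one of them is asserted on insufficient grounds. First, the triangular shape of the coefficient matrix does not by itself force $\htt(g_1,\dots,g_q)=q$: Krull's height theorem gives only $\htt(g_1,\dots,g_q)\le q$, and triangularity only shows that the vanishing locus has the expected codimension off $V(a_1\cdots a_q)$ (equivalently, that $A_a\cong R_a[T_0]$); it says nothing about components of $V(g_1,\dots,g_q)$ lying inside $V(a_1\cdots a_q)$, which is exactly where a height drop could occur. Note also that you cannot borrow the height computation from the Rees algebra at this stage: $\htt(J)=q$ together with $(g_1,\dots,g_q)\subseteq J$ bounds $\htt(g_1,\dots,g_q)$ only from above, and in the paper the equality $\htt(J_1)=q$ (\cref{t:regular}) is derived \emph{after} linear type is known, precisely because only then is $S/J_1\cong R[It]$ of dimension $n+1$. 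Second, the step you yourself flag as ``the main obstacle'' --- that $a$ is a nonzerodivisor on $A$, equivalently that $(g_1,\dots,g_q)$ is prime, equivalently that $\sym_R(I)$ is $R$-torsion-free --- is the entire content of the theorem, and your proposal offers only a direction (``combinatorial leverage'' from the variables $x_{a_i}$, or torsion-freeness of the graded strands) rather than an argument. The two gaps are also entangled: your appeal to Cohen--Macaulayness of $A$ to get unmixedness presupposes the regular-sequence claim, which in turn hinges on controlling the components over $V(a_1\cdots a_q)$, i.e.\ on essentially the same unproved nonzerodivisor statement.

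For comparison, the paper avoids primality and primary decompositions altogether: it invokes Tchernev's criterion \cite[Theorem 5.1]{Tc}, which yields linear type (via torsion-freeness of symmetric powers) once $\mu(I_p)\le \dpth R_p$ for every prime $p\supseteq I$, and it verifies this local bound combinatorially: localizing a square-free monomial ideal at $p$ reduces to the monomial prime $p'$ generated by the variables in $p$, where \cref{l:num-gens} gives $\mu(I_{p'})\le n'=\htt p'\le \htt p=\dpth R_p$. If you wish to salvage your direct approach, you must actually establish the torsion-freeness (for instance by an induction on the tree $G$, deleting a leaf), at which point you would in effect be reproving the special case of the cited theorem; as written, the proposal stops exactly where the real work begins.
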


\begin{proof} 
   Let $\mu(I)$ denote the minimal number of generators of $I$. If
    we show that $\mu(I_p) \leq \dpth R_p$ for all prime ideals $p$ of
    $R$ containing $I$, then by a result of Tchernev~\cite[Theorem
      5.1]{Tc} $I$ is of linear type.
      
    Let $p$ be a prime ideal of $R$ containing $I$, and suppose $p'$
    is the ideal generated by all the variables in $p$, i.e. $p'=(x_i
    \mid x_i \in p) \subseteq p$, and suppose that $p'$ is generated
    by $n'$  variables. It is not difficult to see
    that (e.g. see~\cite[proof of Lemma 1]{F02}) $I_p$ and $I_{p'}$
    have the same (square-free) monomial generating set, and so
    \begin{equation}\label{e:mu}
      \mu(I_p) = \mu(I_{p'}).
    \end{equation}
   These generators form a square-free monomial ideal in the polynomial
   ring $R'$ generated by the $n'$ variables generating $p'$, and
   $\pd_{R'}(I_{p'}) \leq 1$, and so from \cref{l:num-gens} it follows
   that
    \begin{equation}\label{e:mu'}
      \mu(I_{p'}) \leq n'= \htt p' \leq \htt p = \dpth R_p.
    \end{equation}
    \cref{e:mu,e:mu'} together imply $\mu(I_p) \leq \dpth R_p$ for all
    primes $I \subseteq p$, and we are done. 
  \end{proof}

\cref{t:lineartype} tells us that $$R[It] = S/J_1=S/(g_1,\dots,g_q).$$
Next we show that the generators  $g_1,\ldots,g_k$ listed in
\eqref{e:regseq} form a regular sequence, and make use of the
associated Koszul complex $\mathbb K$ to extract a chain complex that
leads to a minimal free resolution of $I^r$, where $r>0$.

\begin{theorem}
\label{t:regular}
Let $I$ be a square-free monomial ideal of projective dimension one in a polynomial ring $R$ over a field. Then
$$R[It]\cong R[T_0, \dots, T_q]/(g_1, \dots, g_{q})$$ where $g_1,
\dots, g_q$ are as in \eqref{e:regseq} and form a regular sequence.
\end{theorem}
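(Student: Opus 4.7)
The claimed isomorphism is essentially free: the discussion preceding this theorem already identifies $J_1=(g_1,\dots,g_q)$, and \cref{t:lineartype} tells us $J$ is generated by $J_1$. Hence $R[It]\cong S/(g_1,\dots,g_q)$ for $S=R[T_0,\dots,T_q]$, and the substantive content is that $g_1,\dots,g_q$ form a regular sequence in $S$.

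My plan is to obtain the regular sequence property by a dimension count in the Cohen--Macaulay ring $S$, invoking the standard principle that in a Cohen--Macaulay ring any $q$ elements generating an ideal of height $q$ automatically form a regular sequence. Since $S$ is a polynomial ring of dimension $n+q+1$, it suffices to show
$$
\dim S/(g_1,\dots,g_q)=\dim R[It]=n+1,
$$
because then $\htt(g_1,\dots,g_q)=q$ and the Cohen--Macaulay principle applies directly. The first equality is just the isomorphism above, so the crux is computing $\dim R[It]$.

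For this I would appeal to the classical dimension formula for Rees algebras: if $R$ is a Noetherian ring and $I$ is not nilpotent, then $\dim R[It]=\dim R+1$. In our setting $R=\sfk[x_1,\dots,x_n]$ is a Noetherian domain and $I\ne 0$, so this gives $\dim R[It]=n+1$, completing the chain of computations.

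I do not anticipate any serious obstacle. The only input external to the paper is the Rees-algebra dimension formula, which is standard (a reference such as \cite{SH} or an analogous source could be cited); all other ingredients, namely Cohen--Macaulayness of the polynomial ring $S$ and the height/regular-sequence equivalence in Cohen--Macaulay rings, are textbook. Should a self-contained argument be preferred, an alternative would be to induct on $q$, using the observation that $T_k$ appears in $g_k$ but in none of $g_1,\dots,g_{k-1}$ (since $\tau(j)<j\le k-1$), and then verifying a non-zero-divisor condition on $S/(g_1,\dots,g_{k-1})$ by hand; this route is more combinatorially involved and I would reserve it as a backup.
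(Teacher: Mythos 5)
Your proposal is correct and follows essentially the same route as the paper: both obtain the presentation from \cref{t:lineartype} together with the identification $J_1=(g_1,\dots,g_q)$, then use $\dim R[It]=\dim R+1=n+1$ to compute $\htt(J_1)=\dim S-\dim R[It]=q$, and conclude via the standard Cohen--Macaulay principle that $q$ elements generating a height-$q$ ideal form a regular sequence. The paper's proof is simply a terser version of the same dimension count, leaving the Cohen--Macaulay step implicit.
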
 

\begin{proof}

Recall that the
Rees algebra $R[It]$ has dimension equal to $\dim R+1=n+1$. In
particular, $$ \text{height}(J_1)=\dim R[T_0, \dots, T_q]-\dim
R[It]=n+q+1-(n+1)=q\,.
$$
This yields the desired conclusion.  
\end{proof}

\cref{t:regular} shows that the generators of $J_1$ form a regular sequence in
$S=R[T_0, \dots, T_q]$ and as a result, the Koszul complex of $(g_1,
\ldots, g_q)$ over the ring $S$ is a minimal free resolution of
$S/(g_1, \ldots, g_q)$ (see~\cite{BH} for basic facts about exterior
algebras and Koszul complexes).  Since $S/(g_1, \ldots, g_q)= R[It]$,
we gain information about the cellular resolutions of powers of
$I$ by getting information on the graded strands of the Koszul complex
of $S/(g_1, \ldots, g_q)$. To make that information easier to access,
we use the fact that exterior algebras commute with base
extensions (see, e.g., \cite[\S 6.13]{NJ}).  In other words, instead of starting with
a rank $q$ free module over $S$, we let $F$ be a rank $q$ free module
over $R$ with basis $e_1, \ldots e_q$.

The augmented  Koszul complex on the elements $g_1, \dots,g_q$ is 
\begin{align*}
0\to \bw^{q} F(-q)\otimes S \to &\bw^{q-1} F(-q+1)\otimes S \to \cdots \\
\dots \to \bw^1&F(-1)\otimes S \to S \to S/{J_1} \to 0.
\end{align*}
In this complex, the degree shifts refer to the $T$-grading on $J$, which is determined by an element's total degree in $T_0, \ldots ,T_q$. We write $S=S_0 \oplus S_1 \oplus S_2 \oplus \cdots$ where $S_i$ is the $i$th graded component of $S$ relative to $T$. We then take the linear strand of the Koszul complex above with $T$-degree equal to $r$, we obtain the complex 
\begin{align*} 
\mathbb K^r: \qquad 0\to \bw^{r} F(-r) \otimes S_0 \to \bw^{r-1} F(-r+1)\otimes S_1 \to  \ldots \\
  \dots \to \bw^1 F(-1)\otimes S_{r-1} \to S_r \to I^r \to 0.
\end{align*} 
We note that $\bw^{i} F= 0 $ for all $i>q$.
The differential of this complex can be described by 
$$
\partial^{\mathbb K^r}_i(e_{j_1}\wedge \dots \wedge e_{j_i}\otimes w)=\sum_{k=1}^i (-1)^{k-1}e_{j_1}\wedge \dots\wedge \widehat{e_{j_k}}\wedge\dots\wedge e_{j_i}\otimes g_{j_k}w.
$$ where  $w \in S_{r-i}$.

\begin{bfchunk}{Isomorphism of complexes.} 
We now  compare $\mathbb K^r$ to the $\mathbb Z^n$-graded
cellular complex $\mathbb F_{\ol{G^r}}$ established in
 \eqref{e:cell-complex} with differentials given in \cref{res}.  We
prove that both complexes are isomorphic, which gives us that the
labeled chain complex $\ol{G^r}$ supports the minimal free resolution
of $I^r$.
\end{bfchunk}

\begin{proposition}
\label{p:isomcomplexes}
The chain complexes $\mathbb F_{\ol{G^r}}$ and $\mathbb K^r$ are isomorphic. 
\end{proposition}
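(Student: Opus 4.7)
The plan is to construct an explicit isomorphism of $\mathbb Z^n$-graded complexes by matching basis elements and then verifying compatibility with the differentials.

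In homological degree $i$, a basis element of $\mathbb F_{\ol{G^r}}$ has the form $\bu_{\cC(\bb,B)}$ for some $\bb\in\cNr$ and $B=\{j_1<j_2<\cdots<j_i\}\subseteq\supp(\bb)$, while a basis element of $\mathbb K^r$ has the form $e_{j_1}\wedge\cdots\wedge e_{j_i}\otimes \bt^{\bb'}$ with $1\le j_1<\cdots<j_i\le q$ and $\bb'\in\cN_{r-i}$. I would define
$$
\Phi_i\colon F_i\longrightarrow \bw^i F(-i)\otimes S_{r-i}, \qquad \bu_{\cC(\bb,B)}\longmapsto e_{j_1}\wedge\cdots\wedge e_{j_i}\otimes \bt^{\bb-\sum_{j\in B}\bff_j}.
$$
The condition $B\subseteq\supp(\bb)$ ensures that $\bb-\sum_{j\in B}\bff_j\in\cN_{r-i}$, and the inverse map sends $(e_{j_1}\wedge\cdots\wedge e_{j_i},\bb')$ to $\bu_{\cC(\bb'+\sum_{k}\bff_{j_k},\{j_1,\ldots,j_i\})}$, giving a bijection on bases in every homological degree.

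Next I would check that $\Phi_i$ preserves $\mathbb Z^n$-multidegrees. Since $\psi(T_k)=m_kt$, each $T_k$ carries $\mathbb Z^n$-degree equal to the exponent vector of $m_k$; and since each relation $g_k$ in \eqref{e:regseq} is homogeneous of $\mathbb Z^n$-degree equal to the exponent of $\lcm(m_k,m_{\tau(k)})$, each generator $e_k$ of $F$ inherits that same degree. Therefore the multidegree of $e_{j_1}\wedge\cdots\wedge e_{j_i}\otimes \bt^{\bb-\sum_{j\in B}\bff_j}$ equals the exponent vector of
$$
\bmb\,\prod_{j\in B}\frac{\lcm(m_j,m_{\tau(j)})}{m_j},
$$
which by \eqref{e:monomial-label} in the proof of \cref{p:simplify} is precisely the multidegree of $\bm_{\cC(\bb,B)}$.

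Finally I would verify that $\Phi$ commutes with the differentials. Expanding $\partial^{\mathbb K^r}_i\circ\Phi_i$ on $\bu_{\cC(\bb,B)}$ via the formula for the Koszul differential and plugging in the expression \eqref{e:regseq} for each $g_{j_k}$ gives
$$
\sum_{k=1}^i(-1)^{k-1}e_{j_1}\wedge\cdots\widehat{e_{j_k}}\cdots\wedge e_{j_i}\otimes\Bigl(\tfrac{\lcm(m_{j_k},m_{\tau(j_k)})}{m_{j_k}}T_{j_k}-\tfrac{\lcm(m_{j_k},m_{\tau(j_k)})}{m_{\tau(j_k)}}T_{\tau(j_k)}\Bigr)\bt^{\bb-\sum_{j\in B}\bff_j}.
$$
Since $j_k\in B$, we have $T_{j_k}\cdot\bt^{\bb-\sum_{j\in B}\bff_j}=\bt^{\bb-\sum_{j\in B\ssm\{j_k\}}\bff_j}$ and $T_{\tau(j_k)}\cdot\bt^{\bb-\sum_{j\in B}\bff_j}=\bt^{(\bb-\bff_{j_k}+\bff_{\tau(j_k)})-\sum_{j\in B\ssm\{j_k\}}\bff_j}$, so each monomial appearing in the sum is the image under $\Phi_{i-1}$ of exactly the basis element $\bu_{\cC(\bb,B\ssm\{j_k\})}$ or $\bu_{\cC(\bb-\bff_{j_k}+\bff_{\tau(j_k)},B\ssm\{j_k\})}$ appearing in \eqref{e:res-again}, with matching signs $(-1)^{k-1}=(-1)^{k+1}$ and $(-1)^{k-1}\cdot(-1)=(-1)^k$.

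The main obstacle is essentially bookkeeping: one must carefully track that the subset $B$ on the cellular side, the increasing tuple $(j_1,\ldots,j_i)$ on the Koszul side, and the exponent vectors $\bb$ and $\bb-\sum_{j\in B}\bff_j$ all line up correctly, including signs and multidegrees. Once these identifications are in place, the sign convention of \eqref{e:res-again} was designed precisely so that the two differentials agree term by term, and no further computation is required.
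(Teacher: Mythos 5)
Your proposal is correct and follows essentially the same route as the paper: the same basis-level map $\bu_{\cC(\bb,B)}\mapsto e_{j_1}\wedge\cdots\wedge e_{j_i}\otimes \mathbf{T}^{\bb-\sum_{j\in B}\bff_j}$, the same bijectivity observation, and the same term-by-term comparison of the Koszul differential (via \eqref{e:regseq}) with the cellular differential \eqref{e:res-again}, with matching signs. Your additional multidegree verification via \eqref{e:monomial-label} is a harmless refinement of what the paper leaves implicit.
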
 

\begin{proof}
To describe an isomorphism $\rho\colon \mathbb F_{\ol{G^r}}\to \mathbb K^r$, we need to specify how it acts on the basis elements of the free modules in $\mathbb F_{\ol{G^r}}$. Let $\bu_{\cC(\bb,B)}$  be a basis element of $F_i$, as in \cref{e:cell-complex}, with $B =\{j_1, \dots, j_i\}$ and $
j_1<j_2<\dots<j_i$. Since $\bb \in \cN_r$, $b_0+\cdots +b_q=r$. Let  ${\bf{T}^{\bb}}=T_0^{b_0}\cdots T_q^{b_q} \in S_{r}$. Define $\bb' = \bb - \sum_{i \in B} \bff_i$. Then ${\bf{T}^{\bb'}} \in S_{r-i}$.
We define 
$$
\rho(\bu_{\cC(\bb,B)})=e_{j_1}\wedge \dots \wedge e_{j_i}\otimes {\bf T}^{\bb'}\in \bw^i F\otimes S_{r-i}.
$$
We also note that 
$$\rho(\bu_{\cC(\bb, B\ssm \{j_k\})})=e_{j_1}\wedge \dots \wedge \widehat{e_{j_k}}\wedge\dots\ \wedge e_{j_i}\otimes {\bf T}^{\bb'}T_{j_k}\in \bw^{i-1} F\otimes S_{r-i+1}$$ and hence $$
\rho(\bu_{\cC(\bb,B\ssm \{j_k\})})=e_{j_1}\wedge \dots \wedge \widehat{e_{j_k}}\wedge\dots\ \wedge e_{j_i}\otimes {\bf T}^{\bb'+\bff_{j_k}}.$$
The map $\rho$ is clearly bijective. We need to verify now that $\rho$ is indeed a homomorphism of complexes (i.e. commutes with the differential). Setting $B=\{j_1, \dots, j_i\}$  in \cref{e:res-again}, we get: 
  \begin{align*}
\partial_i^{\mathbb F_{\ol{G^r}}}(\bu_{\cC(\bb,B)})=\sum_{1 \leq k\leq i}(-1)^{k+1} &\frac{\lcm(m_{j_k}, m_{\tau(j_k)})}{m_{j_k}} \bu_{\cC(\bb,B\ssm \{j_k\})}+\\
&+\sum_{1 \leq k\leq i}(-1)^{k}\frac{\lcm(m_{j_k}, m_{\tau(j_k)})}{m_{\tau(j_k)}} \bu_{\cC(\bb-\bff_{j_k}+\bff_{\tau(j_k)},B\ssm\{j_k\})}\,.
\end{align*}
We then have 
\begin{align*}
\rho \left(\partial_i^{\mathbb F_{\ol{G^r}}}(\bu_{\cC(\bb,B)})\right)&=
\sum_{1 \leq k\leq i}(-1)^{k+1}e_{j_1}\wedge \dots\wedge \widehat{e_{j_k}}\wedge\dots\wedge e_{j_i}\otimes \frac{\lcm(m_{j_k}, m_{\tau(j_k)})}{m_{j_k}} {\bf T}^{\bb'+{\bf f}_{j_k}}+\\
&\sum_{1 \leq k\leq i}(-1)^{k}e_{j_1}\wedge \dots\wedge \widehat{e_{j_k}}\wedge\dots\wedge e_{j_i}\otimes \frac{\lcm(m_{j_k}, m_{\tau(j_k)})}{m_{\tau(j_k)}}{\bf T}^{\bb'-\bff_{j_k}+\bff_{\tau(j_k)}+ \bff_{j_k}}.
\end{align*}
On the other hand, we have: 
\begin{align*}
&\partial_i^{\mathbb K^r}(\rho(\bu_{\cC(\bb,B)})=\partial_i^{\mathbb K^r}(e_{j_1}\wedge \dots \wedge e_{j_i}\otimes {\bf T}^{\bb'})\\&=\sum_{1 \leq k\leq i} (-1)^{k-1}e_{j_1}\wedge \dots\wedge \widehat{e_{j_k}}\wedge\dots\wedge e_{j_i}\otimes \left(\frac{\lcm(m_{j_k}, m_{\tau(j_k)})}{m_{j_k}}T_{j_k}-\frac{\lcm(m_{j_k}, m_{\tau(j_k)})}{m_{\tau(j_k)}}T_{\tau(j_k)}\right){\bf T}^{\bb'}\\
&=\rho\left(\partial_i^{\mathbb F_{\ol{G^r}}}(\bu_{\cC(\bb,B)})\right)\,.
\end{align*}
\end{proof}

\begin{bfchunk} {Main results.}
So far, we have seen in \cref{t:regular} that $g_1, \dots, g_q$ is a regular
sequence and the Koszul complex on these elements are
acyclic. Furthermore, since the number of generators of $J_1$ is less than the dimension of $R$, the strand $\mathbb K^r$ of
$\mathbb K$ a minimal free resolution of the $r^{th}$ graded piece
  of $R[It]\cong S/J_1$, which is $I^r$. From this, we obtain the result below.
\end{bfchunk}

\begin{theorem}
\label{thm:main}
Let $I$ be a square-free monomial ideal of projective dimension one,
and $r>0$. Then $I^r$ has a cellular minimal free resolution
  supported on the polyhedral cell complex $\ol{G^r}$.
\end{theorem}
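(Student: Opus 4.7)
The plan is to assemble the structural ingredients developed over the preceding section and conclude quickly. The central step is \cref{p:isomcomplexes}, which identifies the cellular complex $\mathbb F_{\ol{G^r}}$ with the Koszul strand $\mathbb K^r$. With that identification in hand, proving $\mathbb F_{\ol{G^r}}$ is a minimal free resolution of $I^r$ reduces to verifying the analogous statement for $\mathbb K^r$, a question about a strand of the Koszul complex on an explicit regular sequence. In my view the real work of the theorem lies in \cref{p:isomcomplexes} together with \cref{t:lineartype,t:regular}; what remains is essentially bookkeeping about gradings and ideal memberships.

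Next I would show that $\mathbb K^r$ is acyclic and has $I^r$ as its zeroth homology. By \cref{t:regular}, the elements $g_1,\dots,g_q$ form a regular sequence in $S=R[T_0,\dots,T_q]$, so the full Koszul complex $\mathbb K$ on these elements is a free resolution of $S/(g_1,\dots,g_q)\cong R[It]$. Since each $g_k$ is homogeneous of $T$-degree one, the Koszul differential preserves the $T$-grading, and $\mathbb K$ decomposes as a direct sum of its $T$-graded strands. The strand in $T$-degree $r$ is precisely $\mathbb K^r$, and as a direct summand of an exact complex it is exact and resolves the $r^{\text{th}}$ $T$-graded component of $R[It]$, namely $I^r t^r\cong I^r$ as an $R$-module. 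Transporting this along the isomorphism of \cref{p:isomcomplexes} yields exactness of $\mathbb F_{\ol{G^r}}$ with $I^r$ as cokernel.

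Finally I would verify minimality over $R$. From the explicit description \eqref{e:res-again}, every nonzero entry of the differential of $\mathbb F_{\ol{G^r}}$ is a monomial of the form $\lcm(m_{j_k},m_{\tau(j_k)})/m_{j_k}$ or $\lcm(m_{j_k},m_{\tau(j_k)})/m_{\tau(j_k)}$. Because the $m_i$ are distinct square-free minimal generators of $I$, neither $m_{j_k}$ nor $m_{\tau(j_k)}$ divides the other, so each of these quotients is a non-unit monomial lying in $(x_1,\dots,x_n)$. Equivalently, for every cell $c$ and every maximal proper face $c'$ the label $\lcm(c)$ strictly divides $\lcm(c)$ into $\lcm(c')$ with a nontrivial factor, so $\lcm(c)\neq \lcm(c')$, and the criterion from Section~\cref{s:homogenization} gives minimality directly.

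Putting the three steps together, $\mathbb F_{\ol{G^r}}$ is a minimal free resolution of $I^r$, supported by construction on the polyhedral cell complex $\ol{G^r}$ (which is a polyhedral cell complex by \cref{p:poly-cell-cx}). The main obstacle here is not any calculation inside the proof of \cref{thm:main} itself but rather the fact that all of \cref{p:isomcomplexes,t:lineartype,t:regular} must first be in place; once they are, the theorem follows immediately.
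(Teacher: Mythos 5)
Your proposal is correct and follows essentially the same route as the paper: reduce via \cref{p:isomcomplexes} to the Koszul strand $\mathbb K^r$, use \cref{t:lineartype,t:regular} to see that the full Koszul complex resolves $R[It]$ and hence its $T$-degree-$r$ strand resolves $I^r$, and conclude minimality and the cell-complex structure from \eqref{e:res-again} and \cref{p:poly-cell-cx}. Your minimality check (the differential entries $\lcm(m_{j_k},m_{\tau(j_k)})/m_{j_k}$ and $\lcm(m_{j_k},m_{\tau(j_k)})/m_{\tau(j_k)}$ are non-unit monomials since the $m_i$ are distinct minimal generators) is a slightly more explicit justification than the paper's remark, but it is the same argument in substance.
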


\begin{proof}
This follows immediately from \cref{p:isomcomplexes}.
\end{proof}

\begin{corollary}[{\bf The projective dimension of $I^r$ and $I^r/I^{r+1}$}]\label{c:pd-I^r}   If $I$ is generated by $q+1$ square-free
  monomials in the polynomial ring $R$, $I$ has projective dimension
  one, and $r$ is a positive integer, then
    \begin{equation*}
  \pd_RI^r=\begin{cases} q & \qif r\ge q\\
                          r & \qif r< q
\end{cases} \qand \pd_RI^r/I^{r+1}=\begin{cases} q+1 & \qif r\ge q-1\\
                          r+2 & \qif  r< q-1\end{cases}\,.
\end{equation*}
\end{corollary}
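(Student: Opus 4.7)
The plan is to read both projective dimensions off the cellular minimal free resolution $\mathbb F_{\ol{G^r}}$ of $I^r$ provided by \cref{thm:main}. Since this resolution is minimal, $\pd_R I^r$ equals the length of $\mathbb F_{\ol{G^r}}$, which by construction is the maximal dimension of a cell in $\ol{G^r}$. By \cref{p:cubes}, the cells are $\ocC(\bb,B)$ with $\bb\in\cNr$ and $B\subseteq \supp(\bb)\subseteq [q]$, and $\dim\ocC(\bb,B)=|B|$. Hence $\pd_R I^r=\max_{\bb\in\cNr}|\supp(\bb)|$.

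Two bounds constrain $|\supp(\bb)|$: first, $|\supp(\bb)|\le q$ since $\supp(\bb)\subseteq [q]$; second, since every coordinate $b_j$ with $j\in\supp(\bb)$ is at least $1$ and $\sum b_j=r$, also $|\supp(\bb)|\le r$. Both bounds are achieved by choosing a subset of $[q]$ of size $\min(q,r)$ and distributing $r$ among these coordinates (with the remaining mass, if any, placed on $b_0$). This yields $\pd_R I^r=\min(q,r)$, which is the first formula in the corollary.

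For $I^r/I^{r+1}$, the approach is to apply the long exact sequence of $\Tor_R^\bullet(-,\sfk)$ to $0\to I^{r+1}\to I^r\to I^r/I^{r+1}\to 0$. Let $p=\pd_R I^{r+1}$. Using the first part of the corollary, a short case analysis shows $p+1>\pd_R I^r$ in every case: if $r\ge q$, then $\pd_R I^r=\pd_R I^{r+1}=q$, so $p+1=q+1>q$; if $r=q-1$, then $\pd_R I^r=q-1$ and $p=q$; if $r<q-1$, then $\pd_R I^r=r$ and $p=r+1$. In each case, the excerpt
$$
\Tor_{p+1}^R(I^r/I^{r+1},\sfk)\longrightarrow \Tor_{p}^R(I^{r+1},\sfk)\longrightarrow \Tor_{p}^R(I^r,\sfk)
$$
of the long exact sequence has $\Tor_{p}^R(I^r,\sfk)=0$ and $\Tor_{p}^R(I^{r+1},\sfk)\ne 0$, which forces $\Tor_{p+1}^R(I^r/I^{r+1},\sfk)\ne 0$. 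Combined with the standard upper bound $\pd_R I^r/I^{r+1}\le\max\{\pd_R I^r,\,p+1\}=p+1$, this gives $\pd_R I^r/I^{r+1}=p+1$, and substituting the value of $p$ produces the two-case formula stated in the corollary.

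The main obstacle is essentially bookkeeping at the boundary $r=q-1$, where $I^r$ and $I^{r+1}$ straddle the threshold between the two branches of the formula for $\pd_R I^r$; the inequality $p+1>\pd_R I^r$ nonetheless remains strict there, so the long exact sequence argument applies uniformly. Once the first formula is in hand, the rest is routine homological algebra.
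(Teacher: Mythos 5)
Your computation of $\pd_R I^r$ is correct and is essentially the paper's argument: the paper reads the length off the Koszul strand $\mathbb K^r$, while you read it off the isomorphic complex $\mathbb F_{\ol{G^r}}$ via \cref{thm:main} and \cref{p:isomcomplexes}; both give $\pd_R I^r=\min(q,r)$. The second half, however, has a genuine gap, and it occurs in the case you did not flag, namely $r\ge q$. There $p=\pd_R I^{r+1}=q=\pd_R I^r$, so $\Tor_p^R(I^r,\sfk)=\Tor_q^R(I^r,\sfk)\ne 0$, contrary to your blanket claim that $\Tor_p^R(I^r,\sfk)=0$; you have conflated $p+1>\pd_R I^r$ (true in every case) with $p>\pd_R I^r$ (false exactly when $r\ge q$). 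Consequently exactness of
$$\Tor_{p+1}^R(I^r/I^{r+1},\sfk)\longrightarrow \Tor_{p}^R(I^{r+1},\sfk)\xrightarrow{\ \iota_*\ }\Tor_{p}^R(I^r,\sfk)$$
no longer forces $\Tor_{p+1}^R(I^r/I^{r+1},\sfk)\ne 0$: if the map $\iota_*$ induced by the inclusion $I^{r+1}\subseteq I^r$ were injective, this piece of the long exact sequence would be perfectly consistent with $\Tor_{q+1}^R(I^r/I^{r+1},\sfk)=0$. The containment $I^{r+1}\subseteq \fm I^r$ kills $\iota_*$ only in homological degree $0$, not in positive degrees, so some genuine extra input is needed precisely in the stable range $r\ge q$.

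The paper supplies that input by quoting Maleki \cite[Proposition 3.5]{Ma19}: the maps $\Tor_i^R(I^{r+1},\sfk)\to\Tor_i^R(I^{r},\sfk)$ induced by the inclusion are zero for all $i\ge 1$ (and for $i=0$ as well). The long exact sequence then splits into short exact sequences
$$0\to \Tor_i^R(I^r,\sfk)\to \Tor_i^R(I^{r}/I^{r+1},\sfk)\to \Tor_{i-1}^R(I^{r+1},\sfk)\to 0,$$
which yield $\pd_R I^r/I^{r+1}=\max\{\pd_R I^r,\,1+\pd_R I^{r+1}\}=1+\pd_R I^{r+1}$ uniformly, and both branches of the stated formula follow by substituting the first part. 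Your argument does work when $r\le q-1$, since there $p>\pd_R I^r$ and so $\Tor_p^R(I^r,\sfk)$ really vanishes; to repair the case $r\ge q$ you must either invoke the vanishing of $\iota_*$ (Maleki's result, as the paper does) or prove a substitute for it for this class of ideals.
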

 \begin{proof} 
 The projective dimension of $I^r$ can be read from the complex $\mathbb K^r$ (which is isomorphic to $\mathbb F_{\ol{G^r}}$), as we note that $\bw^i F=0$ if and only if $i>q$. 
 
To obtain the formula for $\pd_R I^r/I^{r+1}$ one uses the fact that the map 
$$
\Tor_i^R(I^{r+1}, \sfk)\to \Tor_i^R(I^r, \sfk)
$$
induced by the inclusion $I^{r+1}\subseteq I^r$ is zero for all $i\ge 1$; this follows from a result of Maleki \cite[Proposition 3.5]{Ma19}.  The map above can be seen to be zero when $i=0$ as well. For any $i\ge 1$ there is thus a short exact sequence 
$$
0\to \Tor_i^R(I^r, \sfk)\to \Tor_i^R(I^{r}/I^{r+1}, \sfk)\to \Tor_{i-1}^R(I^{r+1}, \sfk)\to 0
$$
that gives $\pd_R I^r/I^{r+1}=\max\{\pd_R I^r, 1+\pd_R I^{r+1}\}=1+\pd_R I^{r+1}$. 
\end{proof}

Given the structure of this resolution, we are able to say more. We
can find the precise Betti numbers for each power of $I$.

 \begin{corollary}[{\bf The Betti numbers of $I^r$}]\label{c:betti-I^r}
   If $I$ is generated by $q+1$ square-free monomials in the
    polynomial ring $R$ and $I$ has projective dimension one, then the 
 $t^{th}$ Betti number of $I^r$ is ${{q} \choose {t}} \cdot {{q+r-t} \choose {r-t}}$ if $t \leq r$ and $0$ otherwise. In particular, the Betti numbers of $I^r$ do not depend on the characteristic of the base field.
\end{corollary}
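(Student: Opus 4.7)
The plan is to read off the Betti numbers directly from the isomorphism between $\mathbb{F}_{\ol{G^r}}$ and $\mathbb{K}^r$ established in \cref{p:isomcomplexes}, together with \cref{thm:main} which guarantees that $\mathbb{F}_{\ol{G^r}}$ is a \emph{minimal} free resolution of $I^r$. Via that isomorphism, the $t^{th}$ free module in the minimal resolution of $I^r$ is isomorphic to $\bw^{t} F(-t) \otimes_R S_{r-t}$, where $F$ is a free $R$-module of rank $q$ and $S_{r-t}$ denotes the $R$-submodule of $S = R[T_0, \ldots, T_q]$ consisting of forms of degree $r-t$ in the $T$-variables.

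The first step is to compute the $R$-rank of this module. Since $F$ has rank $q$ over $R$, one has $\rk_R \bw^t F = \binom{q}{t}$ (which is automatically $0$ for $t > q$, matching the fact that $\bw^t F = 0$ in that range). Since $S_{r-t}$ has an $R$-basis consisting of the monomials of degree $r-t$ in the $q+1$ variables $T_0, \ldots, T_q$, its $R$-rank is $\binom{(r-t) + q}{q} = \binom{q+r-t}{r-t}$. Multiplying gives the claimed formula $\binom{q}{t}\binom{q+r-t}{r-t}$ for $0 \leq t \leq r$; for $t > r$ the complex $\mathbb{K}^r$ has no term in homological degree $t$, so the Betti number is $0$.

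The second step is to observe that this is indeed the $t^{th}$ Betti number $\beta_t^R(I^r) = \dim_{\sfk} \Tor_t^R(I^r, \sfk)$. This uses minimality: by \cref{thm:main} the complex $\mathbb{F}_{\ol{G^r}}$, and hence also $\mathbb{K}^r$, is a minimal free resolution of $I^r$, so the rank of the $t^{th}$ free module is precisely $\beta_t^R(I^r)$. The characteristic-independence assertion is then immediate: the ranks of $\bw^t F$ and of $S_{r-t}$ depend only on the integers $q, r, t$, and the minimality of $\mathbb{K}^r$ (which follows from $g_1, \ldots, g_q$ being a regular sequence of positive-degree forms in $S$, as in \cref{t:regular}) holds over any base field.

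I do not anticipate any substantive obstacle; the content has already been packed into the isomorphism $\mathbb{F}_{\ol{G^r}} \cong \mathbb{K}^r$ and into the regular-sequence property. The only minor care needed is verifying the convention on the indexing of the complex $\mathbb{K}^r$ (so that position $t$ really corresponds to $\bw^t F \otimes S_{r-t}$) and confirming that $\bw^t F = 0$ for $t > q$ is consistently absorbed into the formula via $\binom{q}{t} = 0$.
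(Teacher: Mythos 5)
Your proposal is correct and is essentially the paper's argument: both read the $t^{th}$ Betti number off the minimal resolution of $I^r$ established via \cref{p:isomcomplexes} and \cref{thm:main}, the only difference being that the paper counts the $t$-dimensional cells $\ocC(\bb,B)$ of $\ol{G^r}$ directly (the $\binom{q}{t}$ choices of $B$ and the $\binom{q+r-t}{r-t}$ vertices $\bb$ whose support contains $B$), whereas you count the $R$-rank of $\bw^{t}F\otimes S_{r-t}$ on the Koszul side. Under the bijection $\rho$ these are the same count, so your computation and the characteristic-independence conclusion match the paper's.
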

\begin{proof}
If $\bb \in G^r$ then each distinct $B \subseteq \supp(\bb)$ with $|B|=t$ determines a cell of size $t$ embedded as above. For each $t$, there are ${{q} \choose {t}}$ distinct sets $B$ of size $t$. For each such $B$, there are ${{q+r-t} \choose {r-t}}$ vertices of $G^r$ whose support contains $B$.
\end{proof}

\begin{example} In our running \cref{ex:path-0}, the path has three vertices, and $I=(xy,yz,zu)$. By Corollary \ref{c:pd-I^r}, $\pd_SI^r=q=2$ for all $r\geq 2$. Furthermore, applying Corollary \ref{c:betti-I^r} for $r=3$, we obtain $\beta_0(I^3)= {5 \choose 3}=10$, $\beta_1(I^3)=2\cdot {4 \choose 2}=12$, and $\beta_2(I^3)= {3 \choose 1}=3$. 
\end{example}

\begin{bfchunk}{An application to the fiber cone.} 
While
the result below also follows directly from~\cref{t:regular}, we
present an alternate proof that illustrates additional properties of the ideals $I$. Before stating the
result, we briefly recall the relevant definitions and
background. Additional information can be found in
\cite{Wolmerbook, Vi}.
 
 An ideal $J\subseteq I$ is a {\bf reduction}~\cite{NR} of $I$ if
 there exists an integer $r$ such that $JI^r = I^{r+1}$. Reductions
 can be viewed as approximations of an ideal $I$ that share asymptotic
 behavior but have fewer generators. The {\bf analytic spread} of $I$,
 denoted by $\ell(I)$, is the minimal number of generators of a
 minimal reduction of $I$.

 An interesting class of ideals are those that are their own minimal
   reductions. Since the dimension of the {\bf fiber cone of $I$}
   $${\mathcal F}(I)= {\frac{R[It]}{\fm R[It]}} = R/\fm \oplus I/\fm I
   \oplus I^2/\fm I^2 \oplus \cdots $$ is the analytic spread, the
   fiber cone is often used to detect this property.
 \end{bfchunk}

\begin{corollary}
If $I$ is a square-free monomial ideal of projective dimension one
minimally generated by $(m_0, \ldots, m_q)$, then $I$ has no proper
non-trivial reductions. Moreover, the fiber cone is isomorphic to a
polynomial ring in $q$ variables.
\end{corollary}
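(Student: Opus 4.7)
The plan is to compute the fiber cone explicitly using the presentation of the Rees algebra given in \cref{t:regular}, and then deduce the reduction claim from the resulting analytic spread.

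By \cref{t:regular} we have $R[It]\cong R[T_0,\ldots,T_q]/(g_1,\ldots,g_q)$, so reducing further modulo $\mathfrak m$ yields
$$
{\mathcal F}(I)=\frac{R[It]}{\mathfrak m R[It]}\cong \frac{\sfk[T_0,\ldots,T_q]}{(\bar g_1,\ldots,\bar g_q)},
$$
where $\bar g_k$ is the image of $g_k$ modulo $\mathfrak m$. From the explicit form in \eqref{e:regseq}, each $g_k$ is a $T$-linear form whose coefficients are $\lcm(m_k,m_{\tau(k)})/m_k$ and $\lcm(m_k,m_{\tau(k)})/m_{\tau(k)}$. Since $m_k$ and $m_{\tau(k)}$ are distinct minimal monomial generators of $I$, neither divides the other, and both quotients are therefore non-constant monomials in $R$, hence lie in $\mathfrak m$. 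Consequently $\bar g_k=0$ for every $k$, so ${\mathcal F}(I)\cong \sfk[T_0,\ldots,T_q]$ is a polynomial ring on the $T_i$'s (one for each minimal generator of $I$).

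For the reduction statement, I would use the classical identity $\ell(I)=\dim {\mathcal F}(I)$. From the previous paragraph this gives
$$
\ell(I)=\dim \sfk[T_0,\ldots,T_q]=q+1=\mu(I).
$$
It is a standard fact that whenever an ideal's analytic spread equals its minimal number of generators, the ideal admits no proper reduction: any reduction $J\subseteq I$ must contain a minimal reduction, whose generator count is at least $\ell(I)=\mu(I)$, and Nakayama then forces $J=I$. I expect the only subtle step to be the passage from the local Noetherian setting of the classical Northcott--Rees theory to the graded polynomial-ring setting here; this is routinely handled by localizing at the irrelevant maximal ideal $\mathfrak m$ and, if necessary, extending scalars to ensure an infinite residue field. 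Everything else is a direct substitution into the presentation of $R[It]$ supplied by \cref{t:regular}.
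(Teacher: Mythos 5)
Your argument is correct, but it is not the route the paper takes: the paper explicitly remarks that the corollary ``follows directly from \cref{t:regular}'' and then deliberately presents an alternate proof, whereas yours is precisely the direct one. You reduce the presentation $R[It]\cong R[T_0,\dots,T_q]/(g_1,\dots,g_q)$ modulo $\fm$ and observe that both coefficients $\lcm(m_k,m_{\tau(k)})/m_k$ and $\lcm(m_k,m_{\tau(k)})/m_{\tau(k)}$ are non-constant monomials (since neither of the distinct minimal generators $m_k$, $m_{\tau(k)}$ divides the other), so every $\bar g_k$ vanishes and ${\mathcal F}(I)\cong\sfk[T_0,\dots,T_q]$. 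The paper instead works with the full defining ideal $J=\ker\phi$: since $I$ is monomial, $J$ is binomial, so the generators of the fiber cone's defining ideal $J(0)$ are differences $\mathbf{T}^{\ba}-\mathbf{T}^{\bb}$ with $\bma=\bmb$ and $|\ba|=|\bb|$, and the injectivity result \cite[Proposition 4.1]{CEFMMSS} forces $\ba=\bb$, hence $J(0)=0$. What each buys: your route is shorter once \cref{t:regular} (and hence linear type and the regular-sequence property) is in hand; the paper's route does not need the complete-intersection presentation at all and showcases the combinatorial fact that the products $\bma$ determine their exponent vectors, which is why the authors chose it. Your handling of the reduction statement via $\ell(I)=\dim{\mathcal F}(I)$, the inequality $\ell(I)\le\mu(I)$, and Nakayama matches the paper's, and your caveat about passing to an infinite residue field is a reasonable (and slightly more careful) remark at the same level of rigor. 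One small point: your count gives a polynomial ring in $q+1$ variables and $\ell(I)=q+1=\mu(I)$, which is the correct conclusion and agrees with the isomorphism ${\mathcal F}(I)\cong\sfk[T_0,\dots,T_q]$ displayed in the paper's own proof; the ``$q$ variables'' in the statement (and the ``$=q=$'' in the paper's proof) is an off-by-one slip in the paper, not a defect of your argument.
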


\begin{proof}
Consider the standard presentation map
$$\phi: R[T_0, \ldots,T_q] \rightarrow R[It] \mbox{\rm{ induced by }}
\phi(T_i) = m_i t.$$ Since $I$ is a monomial ideal, $J=\ker(\phi)$ is
a binomial ideal. In general, $J(0)=(J+\m)/\m R[It]$ is the defining
ideal of $\mathcal{F}(I)$. That is, $$\mathcal{F}(I) \cong \sfk[T_0,
  \ldots, T_q]/J(0).$$ Thus the generators of $J(0)$ have the form
$T_0^{a_0}\cdots T_q^{a_q} - T_0^{b_0}\cdots T_q^{b_q}$ where
$\phi(\bf{T^a}) = \phi(\bf{T^b})$. By definition of $\phi$, this
implies $|\ba| = |\bb|$ and $\bma = \bmb$, which by  \cite[Proposition 4.1]{CEFMMSS} means
that $\ba = \bb$ and therefore $J(0)=(0)$. Thus $$\mathcal{F}(I) \cong
\sfk[T_0, \ldots , T_q].$$ It is well known
(see, for example, \cite{Wolmerbook}) that $\ell(I) = \dim
\mathcal{F}(I)$ and $\ell(I) \leq \mu(I)$, where $\mu(I)$ is the
minimal number of generators of $I$. Hence $\ell(I) = \dim
\mathcal{F}(I)=q=\mu(I)$, which implies $I$ has no proper non-trivial
reductions.
\end{proof}

\subsubsection*{Acknowledgements} 

The research leading to this paper was initiated during the weeklong
workshop ``Women in Commutative Algebra'' (19w5104) which took place
at the Banff International Research Station (BIRS). The authors would
like to thank the organizers and acknowledge the
hospitality of BIRS and the additional support provided by the
National Science Foundation, DMS-1934391.

For this work Liana \c Sega was supported in
part by a grant from the Simons Foundation (\#354594), and Susan Cooper and Sara Faridi were
supported by Natural Sciences and Engineering Research Council of
Canada (NSERC).

The authors are grateful to Bernd Ulrich, Claudia Pollini, Alexandra Seceleanu for useful comments
regarding this work. The computations for this project were done using
the computer algebra program Macaulay2~\cite{M2}.

For the last author, this material is based upon work
  supported by and while serving at the National Science
  Foundation. Any opinion, findings, and conclusions or
  recommendations expressed in this material are those of the authors
  and do not necessarily reflect the views of the National Science
  Foundation.


\bibliography{bibliography}

\end{document}